\documentclass[12pt]{article}
\usepackage{layout,yfonts, graphicx, amsmath, amsthm, amssymb, euscript, enumerate, enumitem, bbold, bbm, mathrsfs,easyReview}

\usepackage{caption}
\usepackage{subcaption}

\usepackage[numbers]{natbib} %cambia el estilo citar la bibliografia
\numberwithin{equation}{section}
\usepackage{color}
\definecolor{webgreen}{rgb}{0,.5,0}
\definecolor{webbrown}{rgb}{0.0,0.0,0.0}
\definecolor{emphcolor}{rgb}{0.95,0.95,0.95}

\usepackage{hyperref}
\hypersetup{%
	colorlinks=true,
	linkcolor=webbrown,
	urlcolor=webbrown,
	filecolor=webbrown,
	citecolor=webbrown,
	breaklinks=true}
\ifpdf \hypersetup{pdftex,
	bookmarksopen=true,
	bookmarksnumbered=true
} \else \hypersetup{dvips} \fi

\usepackage{blindtext} %Para enumerar
\usepackage{scrextend}
\addtokomafont{labelinglabel}{\sffamily}

\usepackage[margin=1in]{geometry}
\theoremstyle{plain}
\newtheorem{theorem}{Theorem}[section]

\newtheorem{lemma}[theorem]{Lemma}

\newtheorem{remark}[theorem]{Remark}

\newtheoremstyle{hyp}{}{}{\itshape}{}{}{}{3pt}{}
\theoremstyle{hyp}

\DeclareMathAlphabet{\mathpzc}{OT1}{pzc}{m}{it}

\newcommand{\uno}{\mathbbm{1}}

\numberwithin{equation}{section}
\DeclareMathOperator{\expo}{e}

\newcommand{\eqdef}{\raisebox{0.4pt}{\ensuremath{:}}\hspace*{-1mm}=}

\newcommand {\R}{\mathbb{R}}

\newcommand {\A}{\mathcal{A}}

\newcommand {\E}{\mathbb{E}}

\newcommand{\diff}{{\rm d}}
\newcommand{\sell}{\mathpzc {s}}

\newcommand{\cont}{{\rm c}}

\newcommand{\blue}{\textcolor[rgb]{0.00,0.0,1.00}}

\newcommand{\Gb}{\mathbb{G}}
\newcommand{\Fb}{\mathbb{F}}

\title{\Large{\sc {A mixed singular/switching  control problem with terminal cost for   modulated  diffusion processes}}\footnote{\textbf{Funding:}  {This} study has been funded by the Russian Academic Excellence Project `5-100'. \blue{Additionally, M. Kelbert was supported by the RSF Grant number 23-21-00052}}}
\author{\large{\bf Mark Kelbert}\\ 
	\large{\bf Harold A. Moreno-Franco}\footnote{Corresponding author: hmoreno@hse.ru}\\
	\small{\it Department of Statistics and Data Analysis}\\ 
	\small{\it Laboratory of Stochastic Analysis and its Applications}\\ 
	\small{\it National Research University Higher School of Economics, Moscow, Russian Federation}}%\\
\date{}
\allowdisplaybreaks
\begin{document}

\begin{center}
	{\LARGE\bf  {Optimal Liquidation in a Defaultable Market}}\\
	\vspace{0.4cm}
		{\large {\bf Daniel Hern\'andez-Hern\'andez}\footnote{Department of Probability and Statistics, Centro de Investigaci\'on en Matem\'aticas A.C., Mexico. Email: dher@cimat.mx.},} {\large  {\bf Harold A. Moreno-Franco}}\footnote{Department of Statistics and Data Analysis, Laboratory of Stochastic Analysis and its Applications, National Research University Higher School of Economics, Federation of  Russia. Email: hmoreno@hse.ru.} {\large {\bf and Jos\'e-Luis P\'erez}}\footnote{Department of Probability and Statistics, Centro de Investigaci\'on en Matem\'aticas A.C., Mexico. Email: jluis.garmendia@cimat.mx.}
\end{center}

\begin{abstract}
	\noindent In this paper we address the problem of optimal liquidation of a large portfolio composed by securities exposed to default risk. The default time is described in terms of a Brownian motion representing the evolution of the  value of the firm, whose assets are available in the market for investors. Considering that selling a large number of assets has a significant impact in the price, and hence in the portfolio's value, the control problem involved to describe the optimal strategy to liquidate a large position is analyzed. Under suitable assumptions in the model, an explicit solution is given to the value function and a precise description of the optimal strategy is obtained. 
	
\end{abstract}

\section{Introduction}
In the classical problem of portfolio optimization it is considered that the investor has a utility function and a set of stocks  used to design an investment strategy which allows him to maximized the discounted value of his  expected  terminal wealth.  However, there are other scenarios in which an investor is interested in selling a large number of shares with the aim of obtaining the maximum expected profit. The problem studied in this paper falls within the latter category, which encompasses  those problems of optimal liquidation. We refer to the work of Alfonsi et al. \cite{AFS, AFS2} and Almgren and Chriss \cite{A-C-2,A-C-3}   for the description of the main elements that must be taken into account when modelling the dynamics involved in the market in this type of liquidation strategy; related with this problem we can also mention the works  by Engle and Ferstenberg \cite{EF}, Gatheral and Shied \cite{GS}, Huberman and Stanzi \cite{HS} and Prediou et al. \cite{PSS}.   

This class of problems naturally gives rise to optimal stochastic control problems. In particular, when diffusion-type models   are used to describe the state dynamics, the solution involves the analysis of HJB equations in order to characterize the value function and the optimal control; see, for instance, the contributions of Guo and Zervos \cite{ZG15} and Hern\'andez-Hern\'andez et. al.\cite{HMP19} on this connection.    Here we consider a market model that leads to default by the firm issuing the shares. This default depends on the company's financial statements, and in this paper we consider the existence of a critical level, described as a barrier in a functional of the stochastic process that describes them, and once this is reached, default by the firm is declared.  As we might have in a high frequency market, complete observation of the asset prices is  assumed. When the liquidity is an issue to execute a position there are works by Lokka \cite{L}, Obizhaeva and Wang \cite{OW}, Shield and Schoneborn \cite{SS} and Schied et al. \cite{SST} addressing those problems. In this work we assume that there is a multiplicative impact in the price, but there are other ways to medel it, as in Bertsimas  and Lo \cite{BL},  Gatheral et al. \cite{GSS} or He and Mamayski \cite{HM}.

Here, we follow the intensity approach to model the default time, based  in the the work of Elliott et.al \cite{E-J-Y}. It consists in describing the default process (a single jump process),  defined in terms of a stopping time, through an intensity process, so that the compensated process is a martingale. Having this extra uncertainty in mind, the problem we are interested consists in finding the optimal execution policy, when we have a large position in the asset, modeling its dynamics before default as  in the Black-Scholes  diffusion, and including a negative effect in the price each time a portion of the original position is sold in the market. 

We write the optimal execution problem as an stochastic optimal control problem  adopting the dynamic programming approach to solve it.  The HJB equation is deduced and a verification theorem is proved; furthermore, an explicit solution is found under specific assumptions in the model parameters. This approach have also been followed to analyze the problem of optimal dividend under default contagion by Qiu et al. \cite{QJL2023}.

The paper is organized as follows. In  Section \ref{section_preliminaries}, we  precisely describe the main elements of the model, including the default time and its intensity.  We also describe the price dynamics when affected by both the uncertainty inherent in the default time and the existence of a large supply of assets. In Section 3 we present the optimal control problem related with the optimal execution problem, motivating the variational form of the HJB equation, and providing the explicit for of its solution, together with an explanation of the resulting optimal execution policy. this is done  making some simplification in the function involved in the default time. This specific form of the default time is pedagogical and represents an interesting case study, as it allows us to infer the correct partitioning of the state space, through which the optimal strategy is defined. Numerical results illustrate these findings in Section \ref{sec:Numerical results}. The Appendix contains some technical results, which are presented at the end of the paper for ease of reading.

\section{Market impact model}\label{section_preliminaries}
In this section we describe the optimal execution model. Let us fix a filtered probability space $(\Omega,\mathcal{F},\mathcal{F}_t,\mathbb{P})$ satisfying the usual conditions and carrying a standard two dimensional $\Fb=(\mathcal{F}_t)$-Brownian motion $W=(W^1, W^2)$ and a random variable $U$, independent of $W$. %and an independent  Poisson process $N^{\gamma}$. 
Given $\rho\in[-1,1]$ fixed, define the correlated Brownian motions  $B_{t}=\rho W^1_t+\sqrt{1-\rho^2}W^2_t$, that is,  $[B,W^1]_{t}=\rho t$; the filtration generated by $B$ and $W$ is denoted by $\Fb^B\eqdef\{\mathcal{F}^B_t\}$ and $\Fb^{W^1,W^2}\eqdef\{\mathcal{F}_t^{W^1_t, W^2_t}\}$, respectively.

We consider an agent holding an initial (positive) position of  $y$ shares of a financial asset, which has to be sold maximising the expected gains. The information available to the agent is enclosed in the filtration $\{\mathcal{F}_t\}$. 
As part of our model, we include a random time $\tau$, which represents a default time, when we are concerned with the sell of stocks, or the disclosure of positive information of the firm, having a positive effect in the prices of the stock, if  we were interested in buying.  We shall consider the model adapted by Hull and White, motivated by the results of  Ettinger et al. \cite{EEH} 

\noindent
{\bf Default time.} We now present two different ways to model the default time, based on the assumption of the existence of an index that describes the financial state of the company, which, when crossed over a threshold, increases the possibility of bankruptcy. 
The first of these will be the case studied in this work, since its meaning in terms of a critical (deterministic) barrier of the index allows us to model interesting cases, in addition to allowing us to present the main results in a concrete way, as well as offering a geometric interpretation of the strategies in terms of a partition of the state space.

\begin{enumerate}
	\item[(a)] The first default $\tau$ is defined, in its general form,  as
	\begin{equation}\label{tau_1}
		\tau\eqdef\inf \left\{t>0: \lambda \int_0^t \psi\left(W^1_s-b(s)\right) \diff s>U\right\},
	\end{equation}
	where $\lambda>0$ is a constant rate,  $U$ is a random variable  independent  of $\mathcal{F}^{(W_1,W_2)}_\infty$, with  exponential law  of parameter one, and $b:\R_+\to\R_+$ is a continuous function. The function $\psi\in[0,1]$  is  right continuous and decreasing that satisfies $\lim_{x\rightarrow-\infty}\psi(x)=1$ and   $\lim_{x\rightarrow+\infty}\psi(x)=0$.
	However, we make the specific choice    $\psi(x)\eqdef \uno_{(-\infty,0)}(x)$,  considering   $$\psi(W^1_{t}-b)=\uno_{\{W^1_{t}<b\}}. $$ donde $b>0$ is a constant; see Ettinger et. al. \cite{EEH}.  Here $W^1_t$ represents the evolution of the credit index value of the firm. Note that when $\{W^1_{t}<b\}$ and $\{\tau> t\}$ describe the time periods prior to the company's default. 
	\item [(b)] The second option is to take the default time as $\tilde{\tau}\eqdef\inf\{t>0:W^1_{t}<b\}$, where $b>0$. This is somehow more drastic that the previous one, as it is expressed in terms of a barrier of the index process, unlike the previous one, in which there is an accumulation of time that the index process spends below the threshold value. In this case company's default is declared as soon as the index process falls below the barrier $b$.
\end{enumerate}

Define the {\it default process} $D_t=\uno_{[\tau> t]}$ 
%\uno_{[\tau\leq t]}$\blue{$\uno_{[\tau\geq t]}$? Mas a\'un podemos cambiarlo por la versi\'on cadlag es decir $\uno_{[\tau> t]}$ es decir en $t=\tau$ entramos en default} 
and its $\sigma-$field $\mathcal{D}_t=\sigma(D_s: s\leq t)$. 
Throghout it is assumed that the information available to the investor is contained in the completed version of the filtration  
$ \Gb=\Fb^{W^1,W^2}\vee\mathcal{D}$.
In this case, the process  
\begin{equation*}%\label{mart_Yor}
	M_t\eqdef D_t-\lambda \int_0^{\tau\wedge t}   \psi\left(W^1_s-b\right)\diff s
\end{equation*}
is a $\Gb-$ martingale, which implies that $t\to \lambda_t\eqdef\lambda  \psi\left(W^1_t-b\right) \uno_{[t< \tau]}$ is the $\Gb-$ intensity of $\tau$, and hence  $\langle D\rangle_t=\int_0^t \lambda_s \diff s$. The results presented in this paper consider only Case (a), with $\psi(W^1_{t}-b)=\uno_{\{W^1_{t}<b\}}$. 

Note that, defining 
\begin{equation}\label{def:lambda}
	h(r)\eqdef\lambda \int_0^r \psi\left(W^1_s-b\right) \diff s,
\end{equation}
for $t\geq r$, 
$$
\mathbb{P}[\tau>r| \mathcal{F}^{W_1,W_2}_t]=\exp\Bigg[- \lambda \int_0^r \psi\left(W^1_s-b\right)\diff s\Bigg]=\expo^{-h(r)}.
$$
%with $h(r)\eqdef\lambda \int_0^r \psi\left(W^1_s-b\right) \diff s$.
This follows from the following observation. Since $U$ is an  exponential random variable with parameter one, which is independent of
$\mathcal{F}^{W^1,W^2}_{\infty},$   it follows that
\begin{align*}
	\mathbb{P}[\tau>r| \mathcal{F}^{W_1,W_2}_t]&=\E[\uno_{\{\tau>r\}}|\mathcal{F}^{W_1,W_2}_t]\\
	&=\E[\uno_{\{h(r)\leq U\}}|\mathcal{F}^{W_1,W_2}_t]\\
	&=\E[\E[\uno_{\{h(r)\leq U\}}|F^{W_1,W_2}_{\infty}]|\mathcal{F}^{W_1,W_2}_t]\\
	&=\E[ \expo^{-h(r)}| \mathcal{F}^{W_1,W_{2}}_t]\\
	&=  \expo^{-h(r)}.
\end{align*}

%In fact, we have that  
%$$
%\mathcal{P}[\tau>r| \mathcal{F}^{W_1,W_2}_r]=\mathcal{P}[\tau>r| \mathcal{F}^{W_1,W_2}_t],
%$$
Hence, taking the limit when $t\to\infty$, we get 
\begin{align}\label{eq:estdefa}
	\mathbb{P}[\tau>r| \mathcal{F}^{W_1,W_2}_r]&=\mathcal{P}[\tau>r| \mathcal{F}^{W_1,W_2}_\infty]\notag\\
	&=\exp\Bigg[- \lambda \int_0^r \psi\left(W^1_s-b\right)\diff s\Bigg].
\end{align}

%\blue{[Aqui me surge una duda, no se si esta condici\'on se pueda extender a lo siguiente:
	%\begin{equation}\label{eq1}
	%\mathcal{P}[\tau>r| \mathcal{F}^B_{\infty}]=\exp\{- \lambda \int_0^r \psi\left(B_s-b(s)ds\right)\}
	%\end{equation}
	%\red{Considerando que $\{\tau>t\}=\{h(t)\leq U\}$ y el hecho de que $U$ es una exponencial independiente de $\mathcal{F}^{B}_{\infty}$ se sigue \eqref{eq1}, verdad?}
	%Quiz\'as bajo ciertas hip\'otesis se pueda tomando el l\'imite cuando $t\to\infty$, es decir
	%\begin{align*}
	%\mathcal{P}[\tau>r| \mathcal{F}^B_{\infty}]=\lim_{t\to\infty}\mathcal{P}[\tau>r| \mathcal{F}^B_t]&=\lim_{t\to\infty}\exp\{- \lambda \int_0^r \psi\left(B_s-b(s)ds\right)\}\notag\\
	%&=\exp\{- \lambda \int_0^r \psi\left(B_s-b(s)ds\right)\}.
	%\end{align*}
	%Esto implicar\'ia que
	%\begin{align}\label{cond_H}
	%\mathcal{P}[\tau>r| \mathcal{F}^B_{\infty}]=\mathcal{P}[\tau>r| \mathcal{F}^B_{t}].
	%\end{align}
	
	According to Bremaud and Yor \cite{BY}, this implies that the {\it H-}hypothesis holds, which establishes that   any $\mathbb{F}^{W_1,W_2}$ martingale is also a $\Gb-$martingale.
	Moreover, by the Jeulin-Yor theorem, the intensity process of $\tau$ is given by 
	\begin{equation}\label{def:intprocess}
		\lambda_t\eqdef\lambda  \psi\left(W^1_t-b\right) \uno_{[t< \tau]}.
	\end{equation}
	
	\noindent
	{\bf Investor's position.} 
	Given the initial amount of shares $y$ held by the investor, the trading strategies are denoted by  $\xi^{\sell}$, representing the amount of shares that have been sold  by the investor until time $t$. Hence, the total amount of shares held by the investor at time $t$ is given by
	\[
	Y_t\eqdef y-\xi^{\sell}_t, \qquad t\geq0,
	\]
	where   $\xi^{\sell}$ is a non-decreasing c\`agl\`ad  $(\mathcal{G}_t)$-adapted process such that
	\begin{align}\label{eq:hypstra}
		\xi^{\sell}_{0+}=0,%\qquad\text{and}  \qquad \lim_{t\to\infty}Y_t=0.
	\end{align}
	and $\xi^{\sell}_{\tau}=\xi^{\sell}_{t}$, for  $\;t>\tau$.
	We denote the set of strategies that satisfy the above assumptions by $\A(y)$. The description of the execution model is based on the work by Guo and Zervos \cite{ZG15}. 
	
	\begin{remark} (i) Strategies that combine the possibility of buying assets of stock before default can also be included in the model without much complication. However, as in the case studied by Guo and Zervos \cite{ZG15}, we expect that the optimal execution problem is reduce to the case when only selling is allowed.\\
		(ii) The hypothesis on the far right of (\ref{eq:hypstra}) establishes the fact that the investor aims to liquidate the complete position.   Further, after default $\tau$ occurs, no more transactions are allowed. 
		
	\end{remark}
	
	\noindent
	{\bf Stock price process.}
	The stock price observed by the agent, independently of the actions of other market participants, is modeled by the geometric Brownian motion $X^0$ with drift,
	which is known to satisfy the SDE:
	\[
	\frac{\diff X_t^0}{X_{t}^0} = \mu  \diff t + \sigma \diff B_t, \qquad t \geq 0,
	\]
	for some constants $\mu\in \R$ and $\sigma\neq 0$.
	This price process will be impacted in two different ways along the evolution of the market and the investor's execution strategy. Firstly, due to the oversupply of shares in the market due to the strategy $\xi^{\sell}\in\A(y)$ and, secondly, due to the company's default.
	Regarding the first one, 
	when the agent decides to sell  some  number of shares of the asset at time $t$, we assume that there is a multiplicative impact in the price, namely, the resulting price $\tilde{X}_t$ is assumed to have the form
	\begin{equation}\label{priceP}
		\tilde{X}_t=X^0_{t}\exp\{-\gamma \xi^{\sell}_t \},
	\end{equation}
	for some positive constant $\gamma$ describing the permanent impact on the price, in terms of the process $X^0_t$.  This is a c\`agl\`ad  process and we denote 
	$\Delta \xi^{\sell}_t=\xi^{\sell}_{t^+}- \xi^{\sell}_t$.
	Denoting the process $(\xi^{\sell})^{\cont}$    as the continuous part of $\xi^{\sell}$ 
	we get the dynamics of the price (affected by the transactions) as
	\begin{align}\label{eq4}
		\frac{\diff\tilde{X}_t}{\tilde{X}_{t}}=\left[\mu \diff t+\sigma \diff B_t \right]-\left[\diff (\xi^{\sell})_t^{\cont}+\int_0^{\Delta\xi_{t}^{\sell}}\expo^{-\gamma u}\diff u\right],\;\;t\geq 0.
	\end{align}
	Note that,   defining 
	\begin{align}\label{operator}
		\tilde{X}_{t}\circ_{\sell}\diff\xi_{t}^{\sell}&=\tilde{X}_{t}\diff (\xi^{\sell})_t ^{\cont}+\dfrac{1}{\gamma} \tilde{X}_{t}\Big[1-\expo^{-\gamma\Delta\xi_{t}^{\sell}}\Big]\notag\\
		&=\tilde{X}_{t}\diff (\xi^{\sell})_t^{\cont}+\tilde{X}_{t}\int_0^{\Delta\xi_{t}^{\sell}}\expo^{-\gamma u}\diff u,
	\end{align}
	the last term on the right of the above display can be written in a more compact way.

	Concerning the effect of the default in the price process, we assume that after the default time $\tau$ the price of the asset is zero, that is,
	\begin{equation}\label{GBM1}
		X_t \eqdef \tilde{X}_{t} \uno_{\{t \leq \tau\}}
	\end{equation}
	is the price process for a given investment strategy $\xi^{\sell}$.
	Therefore, the investor loses the remainder of their position once the default occurs, as $X_t = 0$ for $t > \tau$. Observe that $X^0_0=\tilde{X}_0=X_0=x\in \R^+$.
	
	\section{Stochastic control problem with full information}\label{sec_prob_formulation}
	
	Denoting by  $Y_t$  the number of shares left in the portfolio of the investor at time $t$, the pair $(\tilde{X}_t, Y_t)$ is referred as the state (controlled)  process associated to the strategy 
	$\xi^{\sell}$. 
	Let $C^{\sell}$ be a positive constant representing the transaction cost associated with the sell  of shares and $\delta>0$ the time discounting  factor. Then, denoting by $\E_{x,y,w}$  the conditional expectation on the event $\{\tilde{X}_0=x,Y_0=y,W^1_0=w\}$, the expected net present value of gains associated with each strategy $\xi^{\sell}\in \A(y)$ is given by
	\begin{align}\label{pc1.0}
		J_{x,y,w}(\xi^{\sell})&\eqdef\E_{x,y,w}\Biggr[\int_{[0,\tau)}\expo^{-\delta t}\left[\tilde{X}_{t}\circ_{\sell}\diff\xi^{\sell}_t-C^{\sell}\diff\xi_{t}^{\sell}\right]-\expo^{-\delta\tau}g(\tilde{X}_{\tau},Y_{\tau})\uno_{\{\tau<\infty\}}\Biggl].
	\end{align}
	The aim of the investor is to maximize the functional $J_{x,y,w}(\cdot)$ over the set $ \A(y)$. 
	
	Throughout   $g:\R^2_+\mapsto \R_+$ is a non-negative continuous function representing the terminal cost at the default time $\tau$, satisfying the boundary condition  $g(x,0)=0$,
	and such that, for $y\in B\subset \R^+$, with $B$ a compact set, there exists a positive constant, possibly depending on $B$, $C(B)$ such that $x\to g(x,y)\leq C(B) x$, for $x\geq 0$. Additionally, we also assume that
	\begin{align}\label{cond_drift}
		\delta>\mu.
	\end{align}

	%Note that $\tilde{X}_{0}=X^{0}_0=x>0$.
	Now, conditioning in the r.h.s. of (\ref{pc1.0}) by $\mathcal{F}^{W_1,W_2}_{\infty}$ and recalling the definition of $\tau$ as in \eqref{tau_1}, we obtain that
	\begin{align}\label{aux}
		J_{x,y,w}(\xi^{\sell})&=%\E_{x,y,w}\Biggr[\int_{[0,\tau)}\expo^{-\delta t}\left(\tilde{X}_{t }\circ_{\sell}\diff\xi^{\sell}_t-C_{\sell}\diff\xi_{t}^{\sell}\right)
		%-\expo^{-\delta\tau}g(\tilde{X}_{\tau-},Y_{\tau-  })\uno_{\{\tau<\infty\}}\Biggl]\notag\\
		%&=
		\E_{x,y,w}\Bigg[\int_{0}^{\infty}\expo^{-\lambda\int_0^t\psi(W^1_s-b)\diff s-\delta t}\left[\tilde{X}_{t}\circ_{\sell}\diff\xi^{\sell}_t-C^{\sell}\diff\xi_{t}^{\sell}\right]\notag\\&\;\;\; -\int_0^\infty \expo^{-\lambda\int_0^t\psi(W^1_s-b)\diff s-\delta t}\lambda\psi(W^1_{t}-b)g(\tilde{X}_{t },Y_{t })\diff t\Bigg]\notag\\
		&= \E_{x,y,w}\Bigg[\int_{0}^{\infty}\expo^{-\tilde{h}(t)}\Bigg[\tilde{X}_t\circ_{\sell}\diff\xi^{\sell}_t-C^{\sell}\diff\xi_{t}^{\sell} -\lambda\psi(W^1_{t}-b)g(\tilde{X}_{t},Y_{t})\diff t\bigg]\Bigg],
	\end{align}
	with the dynamics of $\tilde{X}$ described by (\ref{priceP}). Here $\tilde{h}(t)\eqdef h(t)+\delta t$; see (\ref{def:lambda}).
	
	We are interested in characterizing  the value function  defined by
	\begin{align*}%\label{eq13}
		V(x,y,w)=\sup_{\xi^s\in \A(y)}J_{x,y,w}(\xi^{\sell}).
	\end{align*}
	Notice that   $V(x,0,w)=0$ for any $x\geq 0$ and $w\in\R$.  Moreover, at the initial time the investor can sell immediately the available stock $y$, implementing the strategy with $\xi^s_0=y$,  that naturally induce the lower bound for the value function 
	\begin{equation}\label{eq:lowbound}
		V(x,y,w)\geq \frac{x}{\gamma}[1-\expo^{-\gamma y}] -C^{\sell} y.
	\end{equation}
	An upper estimate can also be obtained using  Ito's formula for
	\begin{align*}
		\diff(\expo^{-\tilde{h}(t) }  \tilde{X}_{t})&= \diff(\expo^{-\tilde{h}(t) } X^0_t \expo^{-\gamma \xi^s_t})\\
		=& \expo^{-\tilde{h}(t)}\left[ -(\lambda \psi(W^1_t-b)+\delta)X_t^0 \expo^{-\gamma \xi_t^s} \diff t +X^0_t \diff\expo^{-\gamma \xi^s_t} +
		\expo^{-\gamma \xi^s_t}(\mu X^0_t\diff t+\sigma X^0_t dB_t)\right]\\
		=&\expo^{-\tilde{h}(t)}\left[ (\mu-\delta-\lambda\psi(W^1_t-b))X^0_t \expo^{-\gamma \xi^s_t}\diff t+X^0_t \diff \expo^{-\gamma \xi^s_t}+\sigma X^0_t \expo^{-\gamma \xi^s_t} \diff B_t  \right].
	\end{align*}
	Observe that extracting the second term on the r.h.s., we get for fixed finite  $T$,
	\begin{align*}
		\int_0^T \expo^{-\tilde{h}(t)}  \tilde{X}_t\circ_{\sell}\diff\xi^{\sell}_t &= -\frac{1}{\gamma}\int_0^T \expo^{-\tilde{h}(t)}  X^0_t \diff \expo^{-\gamma \xi^s_t}=\frac{x}{\gamma}-\frac{1}{\gamma}\expo^{-\tilde{h}(T)}  \tilde{X}_{T}\\
		&\quad+\frac{1}{\gamma}\int_0^T \expo^{-\tilde{h}(t) } X^0_t \expo^{-\gamma \xi^s_t} \left[ (\mu-\delta-\lambda\psi(W^1_t-b)) \diff t +\sigma \diff B_t \right].
	\end{align*}
	In view of the hypotheses on the parameters (\ref{cond_drift}), we get from (\ref{aux}) that
	$J_{x,y,w}(\xi^{\sell})\leq \frac{x}{\gamma},$
	and, since $\xi^{\sell}$  was chosen arbitrarily, we conclude that 
	$$V(x,y,w) \leq \frac{x}{\gamma}.$$
	The above estimations of the value function $V$ will be useful to characterize it through the HJB equation, once the verification result has been established.
	
	We define the operator $\mathcal{L}$ acting on functions $f\in\mathcal{C}^{2,1,2}(\R\times \R_+\times\R)$ by
	\begin{align*}
		\mathcal{L}f(x,y,w)%\eqdef \frac{1}{2}\sigma^2x^2f_{xx}(x,y)+\mu f_x(x,y)-\left(\lambda \psi(b-b_0)+\delta\right)f(x,y),\\
		\eqdef& \frac{1}{2}\sigma^2x^2 f_{xx}(x,y,w)+\mu x f_x(x,y,w)+\frac{1}{2}f_{ww}+\rho\sigma x f_{xw}-\delta f(x,y,w).
	\end{align*}

	In order to derive heuristically  the HJB equation, we follow the next argument adapted to our problem  from \cite[p. 295]{ZG15}. Assume that at time $t>0$ the stock price is $x$, the number of stocks left in the portfolio is $y$ and the index value is $w$. Then,  using (\ref{eq:estdefa}),  there are two possible scenarios, described by the possibility that, for $\Delta t>0$, 
	\begin{enumerate}
		\item $t<\tau\leq t+\Delta t$, which occurs with an approximate (conditional) probability given by $\Delta t \lambda_t$.
		
		\item $\tau> t+\Delta t$,  which occurs with an approximate (conditional) probability given by $1-\Delta t \lambda_t$.
	\end{enumerate}
	Then, a possible investment strategy is that the investor   sells all the available  stock in the first scenario and, in the second, do not sell anything. In this case, 
	applying this argument at $t=0$, given the homogenity of the model,
	and based in the expression (\ref{pc1.0}) for $J_{x,y,w}$, we get 
	\begin{multline}\label{Est1}
		v(x,y,w)\geq  \E_{x,y,w}\left[\expo^{-\int_0^{\Delta t} (\lambda_s+\delta)\diff s}v(X^{0}_{\Delta t},y,W^1_{\Delta t})\uno_{[\tau> \Delta t]}\right.\\
		\left.+ \expo^{-\int_0^{\Delta t} \lambda_s \diff s} g(X^{0}_{\Delta t},y)\uno_{[\tau> \Delta t]}+\left[ \frac{x}{\gamma}[1-\expo^{-\gamma y}] -C^{\sell} y\right]\uno_{\{\tau\leq \Delta t\}} \right].
		%-\E_{x,y,w}\left[  g(\red{X^{0}_{\tau}},y)\uno_{\{\tau\leq \Delta t\}}\right].
	\end{multline} 
	Here, we have used the hypothesis on $g$ and  (\ref{eq:lowbound}). 
	Then, using Ito's rule and letting $\Delta t$ go to zero, we get the inequality
	$$
	\mathcal{L}v(x,y,w)-\lambda\psi(w-b)v(x,y,w)-\lambda\psi(w-b) g(x,y)\leq 0.
	$$

	The previous strategy can be modified slightly,  allowing to sell a small fraction of stock in the second scenario and, again, sell all the stock in the first scenario.  Then, the last term in r.h.s. of (\ref{Est1}) remains the same, and hence
	$$
	v(x,y,w)\geq v(x-\varepsilon\gamma x,y-\varepsilon,w)+(x-C^{\sell})\varepsilon.
	$$
	Then, by taking $\varepsilon\downarrow0$ we obtain
	\[
	-\gamma xv_x(x,y,w)-v_y(x,y,w)+x-C^{\sell}\leq 0.
	\]

	Then, the dynamic programming equation is given by
	\begin{multline}\label{HJB1}
		\max\{\mathcal{L}v(x,y,w)-\lambda\psi(w-b)[v(x,y,w)+g(x,y)],\\ -\gamma xv_x(x,y,w)-v_y(x,y,w)+x-C^{\sell}\}=0.
	\end{multline} 
	%\blue{[Creo que deber\'ia ser
		%	\begin{align}\label{HJB2}
			%		\max\{\mathcal{L}v(x,y,w)-\lambda\psi(w-b)v+\lambda \psi(w-b)g(x,y), -\gamma xv_x(x,y,w)-v_y(x,y,w)+x- C^{\sell}\}=0.
			%	\end{align} 
		%	]}
	with the boundary condition
	\begin{align}\label{eq:Boundcon}
		\begin{cases}
			v(x,0,w)=0,\qquad\text{for every $x>0$},\;w\in\R\\
			v(0,y,w)=0,\qquad\text{for every $y>0$},\;w\in\R.\ %\red{\text{[Es necesaria esta hip\'otesis?]}}.
		\end{cases}
	\end{align}

	\subsection{Explicit solution to the HJB equation}	
	In the next result we shall prove that the solution of the HJB equation (\ref{HJB1}) can be obtained explicitly under suitable conditions by constructing  a smooth solution when the  liquidation cost is bilinear; namely, $g(x,y)=Kxy$ for some  positive constant $K$.  This is achieved by constructing a candidate solution and identifying the continuation and selling regions in $\R_+\times\R_+\times\R$, denoted by $\mathbb{W}$ and $\mathbb{S}$, respectively. 
	
	Roughly speaking, the continuation region $\mathbb{W}$ is characterized as the interior of the set where
	\begin{equation}\label{HJB_0_wait}
		\mathcal{L}v(x,y,w)-\lambda\psi(w-b)[v(x,y,w)+g(x,y)]=0,
	\end{equation}
	while the selling region $\mathbb{S}$ is the interior of the set where the gradient constraint
	\begin{equation}\label{HJB_0_sell}
		-\gamma xv_x(x,y,w)-v_y(x,y,w)+x-C^{\sell}=0 
	\end{equation}
	is binding. The threshold value $b$, which appears in the definition of the intensity process $\{\lambda_t\}$  (see (\ref{def:intprocess})) associated with the default time
	$\tau$ will play an important role in the description of these regions, and hence, the investor's optimal actions shall depend on the position of the  index value process $\{W^1_t\}$ with respect to  this point, as it is explained below. The waiting and selling regions ($\mathbb{W}$ and $\mathbb{S}$, resp.) are described next in terms of the functions $F, G_\lambda:\R^+\times \R\to \R^+$ defined in Appendix \ref{proof_HJB_app}, representing the reduction in the price once the transaction occurs:
	\begin{align*}%\label{S1}
		\begin{split}
			\mathcal{W}:&=\{(x,y,w)\in\R_+\times\R_+\times\R:y>0,\ x<F(y,w), \ w{\geq}b \},\\
			\mathcal{S}:&=\{(x,y,w)\in \R_+\times\R_+\times\R:y>0,\ x\geq F(y,w), \ w{\geq}b\},\\
			\mathcal{W}^{(\lambda)}:&=\{(x,y,w)\in \R_+\times\R_+\times\R:y>0,\ x<G_{\lambda}(y,w), \ w{<}b\},\\
			\mathcal{S}^{(\lambda)}:&=\{(x,y,w)\in \R_+\times\R_+\times\R:y>0,\ x\geq G_{\lambda}(y,w), \ w{<}b\}.
		\end{split}
	\end{align*}
	The regions $\mathcal W$ and $\mathcal W^{(\lambda)}$ represent the waiting regions
	in the absence and (possible) presence of default, respectively, while $\mathcal S$ and
	$\mathcal S^{(\lambda)}$ denote the corresponding selling regions under those scenarios; we implicitly assume that (\ref{HJB_0_wait}) and (\ref{HJB_0_sell}) are satisfied in the corresponding region. Define the function
	\begin{equation}\label{fun_Y}
		\mathbb{Y}(x,z)\eqdef\frac{1}{\gamma}\ln\frac{x}{z},\qquad x,z>0.
	\end{equation}
	
	We shall see that in the selling
	regions, the agent liquidates shares according to the mappings
	$\mathbb{Y}(x,F(y,w))$ and $\mathbb{Y}(x,G_\lambda(y,w))$, which account for the possibility
	of default. In fact, each selling region can be decomposed into two disjoint subregions,
	$\mathcal S=\mathcal S_1\cup\mathcal S_2$ and
	$\mathcal S^{(\lambda)}=\mathcal S^{(\lambda)}_1\cup\mathcal S^{(\lambda)}_2$ given by:
	\begin{align*}
		\mathcal{S}_{1}&\eqdef\{(x,y,w)\in\R_{+}\times\R_{+}\times\R:y>0,\ x>F_{0}\expo^{\gamma y},\ w\geq b\},\notag\\
		\mathcal{S}_{2}&\eqdef\{(x,y,w)\in\R_{+}\times\R_{+}\times\R:F_{0}\leq x\leq F_{0}\expo^{\gamma y},\ y>0,\  w\geq b\},\notag\\
		\mathcal{S}^{(\lambda)}_{1}&\eqdef\{(x,y,w)\in\R_{+}\times\R_{+}\times\R:y>0,x>\expo^{\gamma y}G_{\lambda}(y),w<b\},\notag\\
		\mathcal{S}^{(\lambda)}_{2}&\eqdef\{(x,y,w)\in\R_{+}\times\R_{+}\times\R:y>0,\ G_{\lambda}(y)\leq x\leq \expo^{\gamma y}G_{\lambda}(y),\ w<b\},
	\end{align*}
	where $F_{0}>0$ and $G_{\lambda}(y)$  shall be defined later on; see Theorem \ref{HJB_proof}. In $\mathcal S_1$ and $\mathcal S^{(\lambda)}_1$, the agent liquidates all remaining
	shares immediately (see Figure \ref{f1_0}), whereas in $\mathcal S_2$ and $\mathcal S^{(\lambda)}_2$ the
	agent performs a partial liquidation so that the stock price is reduced to
	$F_{0}$ or $G_\lambda(y)$, respectively; for more details on this structure of the regions see Appendix \ref{proof_HJB_app}.
	Intuitively, the presence of default risk induces more aggressive liquidation
	strategies. Consequently, we expect
	$\mathcal W^{(\lambda)}\subset\mathcal W$ and
	$\mathcal S_1\subset\mathcal S^{(\lambda)}_1$, meaning that the waiting region
	shrinks while the region corresponding to immediate liquidation expands when
	default is possible. 
	
	\begin{figure}[h!]
		\centering
		\includegraphics[scale=0.7]{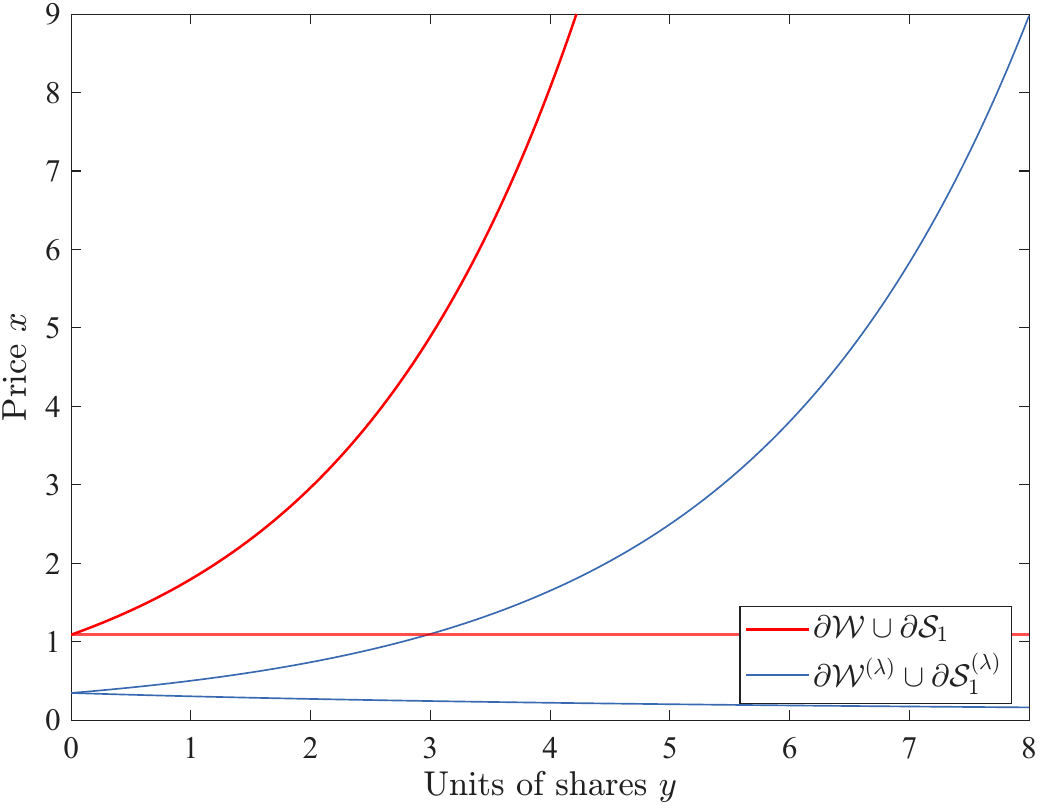}
		\caption{}\label{f1_0}
	\end{figure}
	We state now one of the main results of this paper, which proof is deferred to Appendix~\ref{proof_HJB_app}.
	\begin{theorem} \label{HJB_proof}
		The HJB equation defined in \eqref{HJB1} with boundary conditions as in \eqref{eq:Boundcon} has a solution $v$ that belongs to $\mathcal{C}^{2,1,2}\big(\R_+\times\R_+\times(\R\setminus\{b\})\big)$, which is characterized by
		\begin{equation}\label{eq12}
			v(x,y,w)=v^{(0)}(x,y)\uno_{\{w\geq b\}}+v^{(\lambda)}(x,y)\uno_{\{w<b\}},
		\end{equation}
		%\noident	 
		where 
		\begin{align}
			v^{(0)}(x,y)&=
			\begin{cases}
				\frac{x^{n_0}}{\gamma n_0^2}\left[\frac{n_0-1}{n_0 C^{\sell}}\right]^{n_0-1}[1-\expo^{-\gamma n_0y}]&\text{if}\ 0<x\leq F_{0},\\
				\vspace{-0.4cm}&\\
				v^{(0)}(F_0,y-\mathbb{Y}(x,F_0))&\\
				\quad+\frac{x}{\gamma}\left[1-\expo^{-\gamma \mathbb{Y}(x,F_0)}\right]- C^{\sell}\mathbb{Y}(x,F_0)&\text{if}\ F_{0}<x\leq F_{0}\expo^{\gamma y},\\
				\vspace{-0.4cm}&\\
				\frac{x}{\gamma}\left[1-\expo^{-\gamma y}\right]- C^{\sell}y&\text{if}\ x>F_{0}\expo^{\gamma y},\\
			\end{cases}\label{sol1}\\
			v^{(\lambda)}(x,y)&=
			\begin{cases}
				x^{n_1}B(y)-\frac{\lambda K}{\delta+\lambda-\mu}xy&\text{if}\ 0<x\leq G_{\lambda}(y),\\
				\vspace{-0.4cm}&\\
				v^{(\lambda)}(G_{\lambda}(y),y-\mathbb{Y}(x,G_{\lambda}(y)))&\\
				\quad+\frac{x}{\gamma}\left[1-\expo^{-\gamma \mathbb{Y}(x,G_{\lambda}(y))}\right]- C^{\sell}\mathbb{Y}(x,G_{\lambda}(y))&\text{if}\ G_{\lambda}(y)< x\leq G_{\lambda}(y)\expo^{\gamma y},\\
				\vspace{-0.4cm}&\\
				\frac{x}{\gamma}\left[1-\expo^{-\gamma y}\right]- C^{\sell}y&\text{if}\ x>G_{\lambda}(y)\expo^{\gamma y},
			\end{cases}\label{sol2}
		\end{align}
		with $n_0>1$ and $n_{1}>1$   solutions to the equations, respectively,
		\begin{align}
			\frac{1}{2}\sigma^2 l[l-1]+\mu l-\delta=0,\notag\\
			\frac{1}{2}\sigma^2 l[l-1]+\mu l-[\lambda+\delta]=0,\label{eq7}
		\end{align}
		and
		\begin{align}
			F_0&\eqdef\frac{n_0 C^{\sell}}{n_0-1},\label{eq6}\\
			G_{\lambda}(y)&\eqdef\frac{n_1 C^{\sell}}{[n_1-1]\left[1+\frac{\lambda K}{\delta+\lambda-\mu}(\gamma y+1)\right]},\label{eq10}\\
			B(y)&\eqdef\expo^{-\gamma n_1y}\int_0^y\frac{ C^{\sell}\expo^{\gamma n_1u}}{[n_1-1][G_{\lambda}(u)]^{n_{1}}}\diff u.\label{eq10.1}
		\end{align}
	\end{theorem}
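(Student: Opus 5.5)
Since the candidate $v$ in \eqref{eq12} does not depend on $w$ on either side of $b$, the terms $\frac12 v_{ww}$ and $\rho\sigma x v_{xw}$ in $\mathcal{L}$ vanish on $\R_+\times\R_+\times(\R\setminus\{b\})$. The HJB equation \eqref{HJB1} therefore splits into two decoupled one-dimensional (in $x$, with $y$ as a parameter) variational inequalities. For $w\ge b$, where $\psi(w-b)=0$,
\[
\max\{\tfrac12\sigma^2x^2v_{xx}+\mu xv_x-\delta v,\ -\gamma xv_x-v_y+x-C^{\sell}\}=0 .
\]
For $w<b$, where $\psi(w-b)=1$,
\[
\max\{\tfrac12\sigma^2x^2v_{xx}+\mu xv_x-(\delta+\lambda)v-\lambda Kxy,\ -\gamma xv_x-v_y+x-C^{\sell}\}=0 .
\]
I would verify separately that $v^{(0)}$ solves the first and $v^{(\lambda)}$ the second, each with the boundary conditions \eqref{eq:Boundcon}. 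The $w=b$ line is excluded from the regularity claim, so no matching across $b$ is needed.

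\textbf{Step 1: construction in the waiting region.} For $w<b$ I would take the ansatz $x^{n_1}B(y)+Axy$. Plugging it into the ODE, the $x^{n_1}$ term is annihilated because $n_1$ solves \eqref{eq7}. The linear term forces $A(\mu-\delta-\lambda)=\lambda K$, which gives $A=-\lambda K/(\delta+\lambda-\mu)$. The term $x^{n_2}$ with $n_2<0$ is discarded to keep $v(0,y)=0$, and $v(x,0)=0$ requires $B(0)=0$.

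\textbf{Step 2: free boundary.} On the boundary $x=G_\lambda(y)$ I would impose smooth fit for the gradient constraint, namely $\Phi:=-\gamma xv_x-v_y+x-C^{\sell}=0$ and $\Phi_x=0$. Eliminating $B'(y)$ between these two equations gives $x^{n_1}B(y)$ in terms of $x$ and $y$. Substituting back yields $G_\lambda(y)$ as in \eqref{eq10}, where the factor $1+\frac{\lambda K}{\delta+\lambda-\mu}(\gamma y+1)$ comes from $\partial_y$ and $x\partial_x$ acting on $Axy$. The remaining relation is the linear ODE $B'(y)+\gamma n_1 B(y)=C^{\sell}/[(n_1-1)G_\lambda(y)^{n_1}]$, and with $B(0)=0$ its solution is \eqref{eq10.1}. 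The case $w\ge b$ is the same computation with $\lambda=0$. There $G$ becomes the constant $F_0$ and $B(y)$ integrates explicitly to the stated closed form. This case is essentially Guo--Zervos \cite{ZG15}.

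\textbf{Step 3: the selling region.} For $x>G_\lambda(y)$ the candidate is defined by selling $\mathbb{Y}(x,G_\lambda(y))$ shares, or all $y$ shares if $x>G_\lambda(y)e^{\gamma y}$. Along the characteristic $(xe^{-\gamma u},y-u)$ the derivative $\frac{d}{du}v$ equals $x e^{-\gamma u}-C^{\sell}$. This gives $\Phi=0$ identically in $\mathcal S^{(\lambda)}$, and $C^{2,1}$ regularity across $x=G_\lambda(y)$ and $x=G_\lambda(y)e^{\gamma y}$ follows from smooth fit and from $v(\cdot,0)=0$.

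One caveat: since $G_\lambda$ depends on $y$, the target price after a partial sale should really solve $x e^{-\gamma u}=G_\lambda(y-u)$. I would check whether the stated $\mathbb{Y}(x,G_\lambda(y))$ is meant in this implicit sense, and adjust the formula in \eqref{sol2} if necessary.

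\textbf{Step 4: the inequalities (main obstacle).} It remains to show that $\Phi\le0$ in $\mathcal W^{(\lambda)}$ and that $\mathcal{L}v-\lambda(v+g)\le0$ in $\mathcal S^{(\lambda)}$. For $\Phi\le 0$ in the waiting region I would compute $\Phi$ explicitly as $x^{n_1}(\ldots)+x(1-\ldots)-C^{\sell}$ and use that $\Phi(G_\lambda(y))=\Phi_x(G_\lambda(y))=0$ together with a convexity/sign argument on $\Phi_{xx}$, which needs $n_1>1$ and $B>0$. For the second inequality I would apply the operator to the explicit selling-region formula. In the full-liquidation part this gives a linear expression $\frac{x}{\gamma}(1-e^{-\gamma y})(\mu-\delta-\lambda)+\dots$, which is negative by \eqref{cond_drift} once $x\ge G_\lambda(y)$. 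In the partial-liquidation part I would argue by continuity from the boundary and monotonicity in $x$. Monotonicity of $G_\lambda$ in $y$ (it is decreasing) is the key input, and it is also what makes the implicit/explicit discrepancy above matter. I expect this last verification to be the most delicate step.
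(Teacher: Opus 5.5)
You follow the paper's route. You drop the $w$-derivatives, take the ansatz $x^{n_1}B(y)-\frac{\lambda K}{\delta+\lambda-\mu}xy$, impose smooth fit on the gradient constraint, build the selling region along Guo--Zervos characteristics, and check the inequalities region by region. Steps 1--2 recover $G_\lambda$ and $B$ correctly. One small slip: $B$ and $B'$ enter both fit equations only through $\gamma n_1B+B'$, so eliminating that combination gives $G_\lambda$ directly, not ``$x^{n_1}B$ in terms of $x,y$''.

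\textbf{The caveat in Step 3 is fatal to the statement as written.} Put $\varphi(x,y)=x^{n_1}B(y)-\frac{\lambda K}{\delta+\lambda-\mu}xy$, $\Phi_\varphi=-\gamma x\varphi_x-\varphi_y+x-C^{\sell}$ and $u=\mathbb{Y}(x,G_\lambda(y))$. Differentiating the middle branch of \eqref{sol2} gives exactly
\[
\Phi(x,y)=\frac{G_\lambda'(y)}{\gamma G_\lambda(y)}\,\Phi_\varphi\bigl(G_\lambda(y),\,y-u\bigr),
\qquad
\Phi_\varphi(x',y')=\frac{C^{\sell}}{n_1-1}\bigl[n_1r-r^{n_1}-(n_1-1)\bigr],\quad r=\frac{x'}{G_\lambda(y')}.
\]
Since $G_\lambda$ is strictly decreasing, $r=G_\lambda(y)/G_\lambda(y-u)<1$ for $u>0$, so $\Phi_\varphi<0$ there. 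Combined with $G_\lambda'<0$ this gives $\Phi>0$ on $\mathcal{S}^{(\lambda)}_2$, so the gradient constraint is violated. One also checks that $v_x$ jumps across $x=G_\lambda(y)\expo^{\gamma y}$, so the $\mathcal{C}^{2,1}$ claim fails as well.

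The paper's own check in $\mathcal{S}^{(\lambda)}_2$ says the bracket vanishes ``by smooth fit at $x=G_\lambda(y)$''. But that bracket is evaluated at $(G_\lambda(y),y-\mathbb{Y})$, which lies off the free boundary. The explicit formula is only right when the boundary is constant, as for $w\ge b$. So the theorem cannot be proved as stated (for $\lambda K>0$).

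Your implicit target is the correct repair:
\begin{itemize}
\item The equation $x\expo^{-\gamma u}=G_\lambda(y-u)$ has a unique root $u^*$, since the left side decreases and the right side increases in $u$.
\item All points of a characteristic then share the same landing point, which is what makes $\Phi\equiv0$.
\item $\mathcal{C}^{2,1}$ fit follows because the $u$-derivative of $\varphi(x\expo^{-\gamma u},y-u)+\int_0^u(x\expo^{-\gamma s}-C^{\sell})\,\diff s$ is $\Phi_\varphi(x\expo^{-\gamma u},y-u)$, and both $\Phi_\varphi$ and $\partial_x\Phi_\varphi$ vanish on $x=G_\lambda(y)$.
\item Full liquidation then happens iff $x\ge G_\lambda(0)\expo^{\gamma y}$, not $G_\lambda(y)\expo^{\gamma y}$ as you kept in Step 3.
\end{itemize}

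\textbf{Step 4 also needs repair.} In the full-liquidation region,
\[
\mathcal{L}v-\lambda(v+g)=(\lambda+\delta)C^{\sell}y-x\Bigl[\frac{\lambda+\delta-\mu}{\gamma}(1-\expo^{-\gamma y})+\lambda Ky\Bigr].
\]
The first term is positive, so \eqref{cond_drift} does not settle the sign. Nor does $x\ge G_\lambda(y)$: at $x=G_\lambda(y)$ the expression is positive for large $y$, because $yG_\lambda(y)$ stays bounded. What is needed is $x\ge G_\lambda\expo^{\gamma y}$ plus a quadratic-versus-exponential comparison, as in the paper's $\Theta^{(1)}<\Theta^{(2)}$.

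For the partial region, ``continuity and monotonicity'' is not yet an argument. The paper's monotonicity computation rests on the incorrect explicit formula, so it cannot be reused. A clean argument covers both selling subregions at once. The operator $D:=-\gamma x\partial_x-\partial_y$ commutes with $\mathcal{A}:=\frac12\sigma^2x^2\partial_{xx}+\mu x\partial_x-(\lambda+\delta)$, so $Dv=-(x-C^{\sell})$ gives
\[
D\bigl(\mathcal{A}v-\lambda Kxy\bigr)=C^{\sell}\Bigl[\frac{n_1(\lambda+\delta-\mu)}{n_1-1}\,\frac{x}{G_\lambda(y)}-(\lambda+\delta)\Bigr]\ \ge\ \tfrac12\sigma^2n_1C^{\sell}>0\quad\text{where } x\ge G_\lambda(y).
\]
This uses $\lambda+\delta-n_1\mu=\frac12\sigma^2n_1(n_1-1)$. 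At the landing point, $\mathcal{A}v-\lambda Kxy=0$, either by $\mathcal{C}^2$ fit or because $v\equiv0$ at $y=0$. Integrating along the characteristic from $(x,y)$ to that point gives $\mathcal{A}v-\lambda Kxy<0$ on the whole selling region.

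In the waiting region your concavity argument works. What it needs is $\gamma n_1B+B'>0$, which is automatic from the ODE for $B$, not $B>0$.
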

	\begin{remark} Note that the solution $v$ given in Theorem~\ref{HJB_proof} satisfies, for all $(x,y,w)\in\R_+\times\R_+\times\R$,
		\begin{align}\label{cond_bound} 
			\frac{x}{\gamma}[1-\expo^{-\gamma y}] -C^{\sell} y\leq v(x,y,w)\leq \frac{x}{\gamma}.
		\end{align}
	\end{remark}
	\begin{theorem}
		Consider the optimal execution problem formulated in Section \ref{sec_prob_formulation}. The function $v$ defined in Theorem~\ref{HJB_proof} satisfies that  
		\[
		v(x,y,w)\geq V(x,y,w), \qquad\text{for all $(x,y,w)\in\R_+\times\R_+\times\R$}.
		\] 
		Moreover, if for all initial conditions $(x,y,w)\in\R_+\times\R_+\times\R$, there exists $\xi^*\in \Xi(y)$ such that 
		\begin{align*}%\label{optimal_strategy}
			(\tilde{X}_t^*,Y_t^*,W_t)\in\mathcal{W},\qquad \text{for all $t\geq0$, $\mathbb{P}$-a.s.}
		\end{align*}
		and,
		\begin{align}\label{optimal_strategy_2}
			\xi^*_{t+}=\int_{[0,t]}1_{\{(\tilde{X}_t^*,Y_t^*,W^*_t)\in\mathcal{S}\}}\diff \xi^*_t,\qquad \text{for all $t\geq0$, $\mathbb{P}$-a.s.,}
		\end{align}
		where $\tilde{X}^*$, and $Y^*$ are the share price and shares held processes associated with the liquidation strategy $\xi^*$. Then $v$ identifies with the value function $V$ of the stochastic control problem, i.e.
		\[
		v(x,y,w)=V(x,y,w), \qquad (x,y,w)\in\R_+\times\R_+\times\R.
		\]

	\end{theorem}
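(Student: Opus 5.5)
We prove this by a standard verification argument based on Itô's formula for semimartingales with jumps, applied to $\expo^{-\tilde h(t)}v(\tilde X_t,Y_t,W^1_t)$. Here $\tilde h(t)=\delta t+\lambda\int_0^t\psi(W^1_s-b)\diff s$ is the discount appearing in the reformulation \eqref{aux} of $J_{x,y,w}$. The reformulation \eqref{aux} removes the default time and leaves an $\Fb^{W^1,W^2}$-problem with killing rate $\lambda\psi(W^1_t-b)$. Accordingly, the generator acting on $v$ is $\mathcal{L}v-\lambda\psi(w-b)v$, and the running cost is $-\lambda\psi(w-b)g$.

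\textbf{Itô expansion.} Fix $\xi^{\sell}\in\A(y)$, a localizing sequence $\tau_n\uparrow\infty$ of stopping times, and $T>0$. Expanding over $[0,T\wedge\tau_n]$ gives
\begin{align*}
\expo^{-\tilde h(T\wedge\tau_n)}v(\tilde X_{T\wedge\tau_n+},Y_{T\wedge\tau_n+},W^1_{T\wedge\tau_n})
&=v(x,y,w)+\int_0^{T\wedge\tau_n}\expo^{-\tilde h(t)}\big[\mathcal{L}v-\lambda\psi(W^1_t-b)v\big]\diff t\\
&\quad+\int_0^{T\wedge\tau_n}\expo^{-\tilde h(t)}\big[-\gamma\tilde X_tv_x-v_y\big]\diff(\xi^{\sell})^{\cont}_t+\sum_{t\le T\wedge\tau_n}\expo^{-\tilde h(t)}\Delta_t v+M^n_T,
\end{align*}
where $M^n$ is a local martingale built from the $\diff B$ and $\diff W^1$ integrals.

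\textbf{Regularity across $w=b$.} The function $v$ is only $\mathcal{C}^{2,1,2}$ off the hyperplane $\{w=b\}$, and it has the product form \eqref{eq12}. So we have to justify the $w$-part of Itô's formula. One option is the Itô--Tanaka/Meyer formula in $w$. The other is to observe that $v$ depends on $w$ only through the indicator. In the second case, Itô's rule for $v$ in the variable $w$ reduces to a semimartingale decomposition of $\uno_{\{W^1_t<b\}}$, whose local-time term can be controlled. This is the main obstacle.

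I would first check whether $v^{(0)}$ and $v^{(\lambda)}$ really differ, which they do because $G_\lambda\neq F_0$. In that case Itô's formula in $w$ generates a term of the form $(v^{(0)}-v^{(\lambda)})\,\diff(\text{local time})$, together with a jump in the value. We must control its sign: showing $v^{(\lambda)}\le v^{(0)}$, which is intuitive since default only hurts, should make the contribution of crossings $\le 0$ or otherwise manageable. Failing that, I would interpret the statement with an approximation: mollify $v$ in $w$ near $b$, apply Itô's formula, and pass to the limit using the bounds \eqref{cond_bound}.

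\textbf{Jump terms.} For a jump $\Delta\xi$ we write
\[
\Delta_t v=v(\tilde X_t\expo^{-\gamma\Delta\xi},Y_t-\Delta\xi,W^1_t)-v(\tilde X_t,Y_t,W^1_t)=-\int_0^{\Delta\xi}\big[\gamma \tilde X_t\expo^{-\gamma u}v_x+v_y\big]\diff u,
\]
with the derivatives evaluated along the segment. The gradient inequality in \eqref{HJB1} then gives
\[
\Delta_t v\le-\int_0^{\Delta\xi}\big(\tilde X_t\expo^{-\gamma u}-C^{\sell}\big)\diff u,
\]
which is exactly minus the jump part of $\tilde X_t\circ_{\sell}\diff\xi^{\sell}_t-C^{\sell}\diff\xi^{\sell}_t$ from \eqref{operator}. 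Similarly, the continuous part is bounded by $-(\tilde X_t-C^{\sell})\diff(\xi^{\sell})^{\cont}_t$, and the $\diff t$ integrand is bounded by $\lambda\psi\, g$.

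\textbf{Conclusion of $v\ge V$.} Rearranging gives
\[
v(x,y,w)\ge \E\Big[\int_0^{T\wedge\tau_n}\expo^{-\tilde h}\big(\tilde X\circ_{\sell}\diff\xi^{\sell}-C^{\sell}\diff\xi^{\sell}-\lambda\psi g\,\diff t\big)\Big]+\E\big[\expo^{-\tilde h(T\wedge\tau_n)}v(\cdot)\big].
\]
By \eqref{cond_bound}, $v\ge -C^{\sell}y$ and $v\le \tilde X/\gamma$. Since $\delta>\mu$, the process $\expo^{-\delta t}\tilde X_t$ is a uniformly integrable supermartingale-type quantity, so the boundary term tends to a nonpositive-or-zero limit as $T\to\infty$. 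The linear growth of $g(x,y)=Kxy$ with $y\le y_0$ gives integrability of the running cost, so dominated and monotone convergence apply as $n\to\infty$ and then $T\to\infty$. This yields $v\ge J_{x,y,w}(\xi^{\sell})$, and hence $v\ge V$.

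\textbf{Equality under $\xi^*$.} For the strategy $\xi^*$ described in the statement, every inequality becomes an equality. In the waiting region, \eqref{HJB_0_wait} holds, so the $\diff t$ term is exact. The measure $\diff\xi^*$ charges only the selling region, by \eqref{optimal_strategy_2}, and there \eqref{HJB_0_sell} holds along the whole jump segment. The jump segments stay inside the selling set because the explicit formulas \eqref{sol1}--\eqref{sol2} are affine in the direction of selling there. The same limiting argument then gives $v=J_{x,y,w}(\xi^*)\le V$, and therefore $v=V$.
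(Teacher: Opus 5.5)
Your outline follows the same route as the paper: It\^o's formula for $\expo^{-\tilde h(t)}v(\tilde X_t,Y_t,W^1_t)$, the gradient constraint on the jumps, localization, then $t\to\infty$. However, the step you flag as the main obstacle is a genuine gap, and none of your proposed fixes closes it.

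Because $v^{(0)}\neq v^{(\lambda)}$, the map $w\mapsto v(x,y,w)$ jumps at $b$, so $v(\tilde X_t,Y_t,W^1_t)$ is not a semimartingale. Each of your fixes fails for this reason:
\begin{itemize}
\item $\uno_{\{W^1_t<b\}}$ has no semimartingale decomposition. It is not even c\`adl\`ag, since $W^1$ crosses $b$ infinitely often right after every visit.
\item The It\^o--Tanaka/Meyer formula needs $v$ to be a difference of convex functions in $w$, hence continuous.
\item A mollification $v^\varepsilon$ produces a term $\tfrac12 v^\varepsilon_{ww}$ of size $\varepsilon^{-2}(v^{(0)}-v^{(\lambda)})$ with both signs. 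This breaks $\mathcal L v^\varepsilon-\lambda\psi(v^\varepsilon+g)\le 0$ near $b$, and \eqref{cond_bound} does not control it.
\item Your sign heuristic is backwards. With $v^{(\lambda)}\le v^{(0)}$, for frozen $(x,y)$ and $w<b$,
\[
\E_w[v(x,y,W^1_t)]-v(x,y,w)=(v^{(0)}-v^{(\lambda)})(x,y)\,\mathbb P_w(W^1_t\ge b)>0 .
\]
For fixed $t>0$ this tends to $\tfrac12(v^{(0)}-v^{(\lambda)})(x,y)$ as $w\uparrow b$ rather than being $O(t)$. So upward crossings push the wrong way for a supermartingale argument.
\end{itemize}
The paper's own justification, that $W^1$ spends zero time at $b$, does not help either. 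It only handles a jump in $v_{ww}$ for a function that is $\mathcal C^1$ across $b$, not a jump of $v$ itself.

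In fact the gap cannot be closed, because $v\ge V$ fails on $\{w<b\}$. Take $w<b$, $y>0$, $0<x<G_\lambda(y)$, and let $\xi^\lambda$ reflect $(\tilde X,Y)$ at the curve $x=G_\lambda(y)$. Since $v^{(\lambda)}$ does not depend on $w$, It\^o's formula applies to $\expo^{-\tilde h(t)}v^{(\lambda)}(\tilde X_t,Y_t)$ without difficulty. It uses three facts: $\mathcal L v^{(\lambda)}=\lambda(v^{(\lambda)}+g)$, the gradient equality on the boundary, and the bound $\tilde X_t\le G_\lambda(0)$. One obtains
\[
J_{x,y,w}(\xi^\lambda)=v^{(\lambda)}(x,y)+\E_{x,y,w}\Big[\int_0^\infty\expo^{-\tilde h(t)}\lambda\uno_{\{W^1_t\ge b\}}\big(v^{(\lambda)}+g\big)(\tilde X_t,Y_t)\,\diff t\Big].
\]
On this region $v^{(\lambda)}+g=x^{n_1}B(y)+Kxy\frac{\delta-\mu}{\delta+\lambda-\mu}>0$, so $V(x,y,w)\ge J_{x,y,w}(\xi^\lambda)>v(x,y,w)$.

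A verification argument of this type can only work for a candidate that is continuous, indeed $\mathcal C^1$, in $w$ across $b$. The appendix's ansatz discards the $w$-dependent part $\alpha$; for a $\mathcal C^1$ candidate the zero-occupation-time argument would suffice.

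Two further points concern the equality part. The hypothesis $(\tilde X^*_t,Y^*_t,W_t)\in\mathcal W$ for all $t$ forces $W^1_t\ge b$ for all $t$, an event of probability zero. Also, your claim that jump segments stay in the selling set fails in $\mathcal S^{(\lambda)}_2$: since $G_\lambda$ is decreasing, the endpoint $(G_\lambda(y),y-\mathbb Y(x,G_\lambda(y)))$ lies inside $\mathcal W^{(\lambda)}$.
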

	
	\begin{proof}
		(i) Consider the function $v$ defined in \eqref{eq12}, and an arbitrary strategy $(\xi^s)\in\A(y)$. 
		Note that $v\in\mathcal{C}^{2,1,2}\big(\R_+\times\R_+\times(\R\setminus\{b\})\big)$, and therefore $v$ is not continuous at $w=b$. However, since the Brownian motion $W$ does not spend positive time at $b$, $\mathbb{P}$-a.s., we may apply It\^o's formula to the process $\{\expo^{ -\tilde{h}(t)}v(\tilde{X}_t,Y_t,W^1_t)\}$, with $\tilde{h}(t)\eqdef h(t)+\delta t$, and obtain
		\begin{align}\label{HJB}
			\expo^{-\tilde{h}(t)}&v(\tilde{X}_{t^+},Y_{t^+},W^1_t)=v(x,y,w)\notag\\&+\int_0^t\expo^{-\tilde{h}(s)}\left(\mathcal{L}v(\tilde{X}_{s},Y_{s},W^1_s)-\lambda \psi(W^1_s-b)v(\tilde{X}_{s},Y_{s},W^1_s)\right)\diff s\notag\\
			& -\int_0^t\expo^{-\tilde{h}(s)}\left[\gamma \tilde{X}_{s} v_x(\tilde{X}_{s},Y_{s},W^1_s)+v_y(\tilde{X}_{s},Y_{s},W^1_s)\right]\diff(\xi^s)^{c}_{s}\notag\\
			&+\sum_{0\leq s\leq t}\expo^{-\tilde{h}(s)}\left[v(\tilde{X}_{s+},Y_{s+},W^1_s)-v(\tilde{X}_{s},Y_{s},W^1_s)\right] +\tilde{M}_t,
		\end{align}
		where 
		\begin{align*}
			\tilde{M}_t\eqdef\int_0^t\expo^{-\tilde{h}(s)}\Big[\tilde{X}_{s }v_x(\tilde{X}_{s },Y_{s },W^1_s)\sigma \diff B_s+v_w(\tilde{X}_{s },Y_{s },W^1_s)\diff W^1_s\Big]
		\end{align*}
		is a local martingale.
		Following the proof of Proposition 4.1 of Guo and Zervos\cite{ZG15} and using the fact that $v$ satisfies \eqref{HJB1}, we  get
		\begin{align}\label{jump}
			v&(\tilde{X}_{s^+},Y_{s^+},W^1_s)-v(\tilde{X}_{s},Y_{s},W^1_s)\notag\\
			&\quad=-\int_0^{\Delta \xi_s^s}\Bigg[\gamma \expo^{-\gamma u}\tilde{X}_{s}v_x(\expo^{-\gamma u}\tilde{X}_{s},Y_{s}-u,W^1_s)\notag\\
			&\quad\quad+v_y(\expo^{-\gamma u}\tilde{X}_{s},Y_{s}-u,W^1_s)\Bigg]\diff u\leq \int_0^{\Delta \xi_s^s} [C^{\sell}- \expo^{-\gamma u}\tilde{X}_s]\diff u.
		\end{align}
		Hence, using \eqref{jump} in \eqref{HJB}, we have that
		\begin{align*}
			&\expo^{-\tilde{h}(t)}v(\tilde{X}_{t^+},Y_{t^+},W^1_t)\leq v(x,y,w)\notag\\&+\int_0^t\expo^{-\tilde{h}(s)}\left(\mathcal{L}v(\tilde{X}_{s},Y_{s},W^1_s)-\lambda \psi(W^1_s-b)v(\tilde{X}_{s},Y_{s},W^1_s)\right)\diff s\notag\\
			&+\int_0^t\expo^{-\tilde{h}(s)} [C^{\sell}- \tilde{X}_s] \diff(\xi^s)^{c}_{s}  + \sum_{0\leq s\leq t}\expo^{-\tilde{h}(s)}  \int_0^{\Delta \xi_s^s} [C^{\sell}- \expo^{-\gamma u}\tilde{X}_s]du+\tilde{M}_{t}.
		\end{align*}
		%& -\int_0^t\expo^{-\tilde{h}(s)}\left[\gamma \tilde{X}_{s-}v_x(\tilde{X}_{s-},Y_{s-},W^1_s)+v_y(\tilde{X}_{s-},Y_{s-},W^1_s)\right]\diff(\xi^s)^{c}_{s} +\tilde{M}_{t}\notag\\
		%&-\sum_{0\leq s\leq t}\expo^{-\tilde{h}(s)}\int_0^{\Delta \xi_s^s}\left[\gamma \expo^{-\gamma u}\tilde{X}_sv_x(\expo^{-\gamma u}\tilde{X}_u,Y_u-u,W^1_u)+v_y(\expo^{-\gamma u}\tilde{X}_u,Y_u-u,W^1_u)\right]\diff u.
		Using  again that $v$ is a solution to \eqref{HJB1} and (\ref{operator}), we obtain that
		\begin{align*}
			v(x,y,w)+\tilde{M}_{t}&\geq \expo^{-\tilde{h}(t)}v(\tilde{X}_{t^+},Y_{t^+},W^1_t)\notag\\
			&\quad+\int_{0}^{t}\expo^{-\tilde{h}(s)}\left[\tilde{X}_s\circ_{\sell}\diff\xi^{\sell}_s-C^{\sell}\diff\xi_{s}^{\sell}-\lambda\psi(W^1_{s}-b)g(\tilde{X}_{s},Y_{s})\diff s\right].
		\end{align*}
		By using stopping time $t\wedge \tau_n$, with $\tau_n=\inf\{t\geq 0\;|\;(X_t,W^1_t)\notin B_n\}$, where  $B_n$ is the ball of radius $n$ in $\R^2$, and taking expectations
		
		%By \eqref{cond_bound} and \eqref{cond_drift} we have that $\tilde{M}$ is a martingale, so taking expectation in the previous inequality gives
		\begin{align*}%\label{ver_lemma_2}
			v(x,y,w)&\geq \E_{x,y,w}\left[\expo^{-\tilde{h}(t\wedge \tau_n)}v(\tilde{X}_{(t\wedge \tau_n)^+},Y_{(t\wedge\tau_n)^+},W^1_{t\wedge\tau_n})\right]\notag\\
			&\quad+\E_{x,y,w}\left[\int_{0}^{t\wedge\tau_n}\expo^{-\tilde{h}(s)}\left[\tilde{X}_s\circ_{\sell}\diff\xi^{\sell}_s-C^{\sell}\diff\xi_{s}^{\sell}-\lambda\psi(W^1_{s}-b)g(\tilde{X}_{s},Y_{s})\diff s\right]\right].
		\end{align*}
		Now we can apply Fatou's lemma, taking the liminf as $n\to\infty$, and using the hypotheses on $g$, obtaining 
		\begin{align}\label{ver_lemma_1}
			v(x,y,w)&\geq \E_{x,y,w}\left[\expo^{-\tilde{h}(t)}v(\tilde{X}_{t^+},Y_{t^+},W^1_{t})\right]\notag\\
			&\quad+\E_{x,y,w}\left[\int_{0}^{t}\expo^{-\tilde{h}(s)}\left[\tilde{X}_s\circ_{\sell}\diff\xi^{\sell}_s-C^{\sell}\diff\xi_{s}^{\sell}-\lambda\psi(W^1_{s}-b)g(\tilde{X}_{s},Y_{s})\diff s\right]\right].
		\end{align}
		%	Using the l.h.s. of (\ref{cond_bound}), the first term of the previous display is bounded below by 
		%	$$
		%	-C^{\sell} \E_{x,y,w} [Y_t],
		%	$$
		%	and taking the limit when $t \to \infty$, we get the result, using the fact that $Y_t$ is bounded and $\lim_{t\to \infty} Y_t =0$. \blue{[No entiendo bien para que esta condici\'on y adem\'as creo que no es necesario pedir que $\lim_{t\to \infty} Y_t =0$. Cuando se dice que se obtiene el resultado, c\'ual exactamente? No se puede omitir y mejor decir que por \eqref{cond_bound} 
			%	\begin{align}\label{ver_lemma_1}
				%		v&(x,y,w)\geq \E_{x,y,w}\left[\expo^{-\tilde{h}(t)}v(\tilde{X}_{t^+},Y_{t^+},W^1_{t})\right]\notag\\
				%		&+\E_{x,y,w}\left[\int_{0}^{t}\expo^{-\tilde{h}(s)}\left[\tilde{X}_s\circ_{\sell}\diff\xi^{\sell}_s-C^{\sell}\diff\xi_{s}^{\sell}-\lambda\psi(W^1_{s}-b)g(\tilde{X}_{s},Y_{s})\diff s\right]\right]\notag\\
				%		&\geq-C^{\sell} \E_{x,y,w}\left[\expo^{-\tilde{h}(t)}Y_t+\right]\notag\\
				%		&+\E_{x,y,w}\left[\int_{0}^{t}\expo^{-\tilde{h}(s)}\left[\tilde{X}_s\circ_{\sell}\diff\xi^{\sell}_s-C^{\sell}\diff\xi_{s}^{\sell}-\lambda\psi(W^1_{s}-b)g(\tilde{X}_{s},Y_{s})\diff s\right]\right],
				%	\end{align}
			%	Y como $Y$ es acotado y por \eqref{cond_drift}
			%	$$
			%	\lim_{t\to\infty}\E_{x,y,w}\left[\expo^{-\tilde{h}(t)}Y_t+\right]=0?
			%	$$ 	
			%	Y por ende se obtiene
			%	\begin{align*}
				%v(x,y,w)\geq\E_{x,y,w}\left[\int_{0}^{\infty}\expo^{-\tilde{h}(s)}\left[\tilde{X}_s%\circ_{\sell}\diff\xi^{\sell}_s-C^{\sell}\diff\xi_{s}^{\sell}-\lambda\psi(W^1_{s}-b)g(\tilde{X}_{s},Y_{s})\diff s\right]\right]?
				%\end{align*}
				%	 ]}
			%
			Finally, using \eqref{cond_bound} and \eqref{cond_drift}, we can use dominated convergence to conclude that 
			$$
			\lim_{t\to\infty}\E_{x,y,w}\left[\expo^{-\tilde{h}(t)}v(\tilde{X}_{t^+},Y_{t^+},W^1_t)\right]=0.
			$$ 
			Hence, taking $t\uparrow\infty$ in \eqref{ver_lemma_1}
			we obtain that $v(x,y,w)$ is bounded below by 
			\begin{align*}
				&\E_{x,y,w}\left[\int_{0}^{\infty}\expo^{-\tilde{h}(s)}\left[X_s\circ_{\sell}\diff\xi^{\sell}_s-C^{\sell}\diff\xi_{s}^{\sell}-\lambda\psi(W^1_{s}-b)g(X_{s},Y_{s})\diff s\right]\right]\notag\\
				&=\E_{x,y,w}\Biggr[\int_{[0,\tau)}\expo^{-\delta t}\left(X_t\circ_{\sell}\diff\xi^{\sell}_t-C^{\sell}\diff\xi_{t}^{\sell}\right)
				-\expo^{-\delta\tau}g(X_{\tau-},Y_{\tau-})\uno_{\{\tau<\infty\}}\Biggl].
			\end{align*}
			Thus, the fact that the strategy $\xi\in \mathcal{A}(y)$ is arbitrary, implies that
			\begin{align}\label{optimal_strategy_4}
				V(x,y,w)\leq v(x,y,w), \quad \text{for}\; (x,y,w)\in\R_+\times\R_+\times\R.
			\end{align}
			(ii) On the other hand, note that the strategy $\xi^*$ given by \eqref{optimal_strategy_2} has a performance criterion given by the function $v$ defined in Theorem~\ref{HJB_proof}, which is a solution to \eqref{HJB1}. 
			Then, following steps similar to those used in the derivation of the upper bound for the value function in \eqref{optimal_strategy_4}, we obtain that
			\begin{align}\label{optimal_strategy_6}
				&\expo^{-\tilde{h}(t)}v(\tilde{X}_{t^+}^*,Y_{t^+}^*,W^1_t)= v(x,y,w)+\int_0^t\expo^{-\tilde{h}(s)}\lambda\psi(W_s^1-b)g(\tilde{X}^*_s,Y_s^*)\diff s\notag\\
				&-\int_0^t\expo^{-\tilde{h}(s)}\left[\tilde{X}_{s}^*-C^{\sell}\right]\diff (\xi^s)^{c}_{s} -\sum_{0\leq s\leq t}\expo^{-\tilde{h}(s)}\int_0^{\Delta \xi_s^s}\left[\tilde{X}_u^*-C^{\sell}\right]\diff u+\tilde{M}_{t}^{*}. 
			\end{align}
			Hence, taking expectations in \eqref{optimal_strategy_6} and proceeding like in Step (i) we conclude that $v(x,y,w)$ is iqual to
			\begin{align}\label{optimal_strategy_5}
				&\E_{x,y,w}\left[\int_{0}^{\infty}\expo^{-\tilde{h}(s)}\left[\tilde{X}^*_s\circ_{\sell}\diff\xi^{*}_s-C^{\sell}\diff\xi_{s}^{*}-\lambda\psi(W^1_{s}-b)g(\tilde{X}^*_{s},Y_{s}^*)\diff s\right]\right]\notag\\
				&=\E_{x,y,w}\Biggr[\int_{[0,\tau)}\expo^{-\delta t}\left(\tilde{X}_t^*\circ_{\sell}\diff\xi^{*}_t-C^{\sell}\diff\xi_{t}^{*}\right)
				-\expo^{-\delta\tau}g(\tilde{X}^*_{\tau-},Y^*_{\tau-})\uno_{\{\tau<\infty\}}\Biggl]\notag\\
				&=J_{x,y,w}(\xi^{*})\leq V(x,y,w).
			\end{align}
			Therefore, by \eqref{optimal_strategy_4} and \eqref{optimal_strategy_5} we obtain
			\[
			V(x,y,w)=J_{x,y,w}(\xi^{*})=v(x,y,w),
			\]
			which shows the optimality of the strategy $\xi^*$, proving the claim.
		\end{proof}

		%\subsection{Analysing the limiting behaviour wrt to the default pa\-ra\-me\-ter $\lambda$}
		%\red{[A lo mejor hay que dar mas detalle de los l\'imites]}  %Since the variable $w$ affects $v$ only through the occurrence of default, and hence determines which expression, \eqref{sol1} or \eqref{sol2}, applies, we set $V(x,y)=v(x,y,w)$   for $w>b$, where $v$ is given by \eqref{sol1}, and  $V_{\lambda}(x,y)=v(x,y,w)$ for $w\leq b$, where  $v$ is given by \eqref{sol2}.   
		\begin{remark}
			Notice that, by the definitions of $n_1$, $G_{\lambda}$, and $B$ in \eqref{eq7}--\eqref{eq10.1}, as $\lambda\downarrow 0$ we have the following limits, for $y\in\R_+$,
			\[
			n_{1}\downarrow n_{0}, \qquad 
			G_{\lambda}(y)\to F_0, \qquad 
			B(y)\to \frac{C^s}{\gamma(n_0-1)F_0^{n_0}}\bigl[1-e^{-\gamma n_0 y}\bigr]x.
			\]
			This implies that
			\[
			\lim_{\lambda\downarrow 0} v(x,y,w)=v^{(0)}(x,y),
			\qquad \text{for } (x,y,w)\in\R_+\times\R_+\times\R,
			\]
			where $v^{(0)}$ is given by \eqref{sol1} which coincides, with equation (15) of Guo and Zervos\cite{ZG15}.
			Thus, we recover the results in their Proposition~5.1 
			for the case in which default is not considered in the optimal execution problem.
		\end{remark}

		\section{Numerical results}\label{sec:Numerical results}
		
		In this section, we present numerical examples illustrating  the results obtained in the proceeding sections. We consider here  that the drift and the volatility in \eqref{eq4} are given by $\mu=0.5$ and $\sigma=0.2$. Moreover, the discount rate is $\delta=0.7$,  and  the multiplicative-impact parameter is $\gamma=0.5$. The transaction cost for liquidating the  shares is $C^{\sell}=0.3$, and the cost associated  with default is  $K=0.5$.
		
		Figures \ref{f1} and \ref{f2} display the behaviour of the maps $x\mapsto v(x,y,w)$ (blue line), for the cases that $w\geq b$ and $w< b$, respectively,  for  fifty values  of $y$ evenly spaced on $[0.1,7]$ (with  default parameter $\lambda=0.7$ in Figure \ref{f2}). Recall, the functions $v(x,y,w)=v^{(0)}(x,y)$ (if $w\geq 0$) and $v(x,y,w)=v^{(\lambda)}(x,y)$ (if $w<b$), are defined in \eqref{sol1} and \eqref{sol2}. 
		
		\begin{figure}[h!]
			\centering
			\includegraphics[scale=0.7]{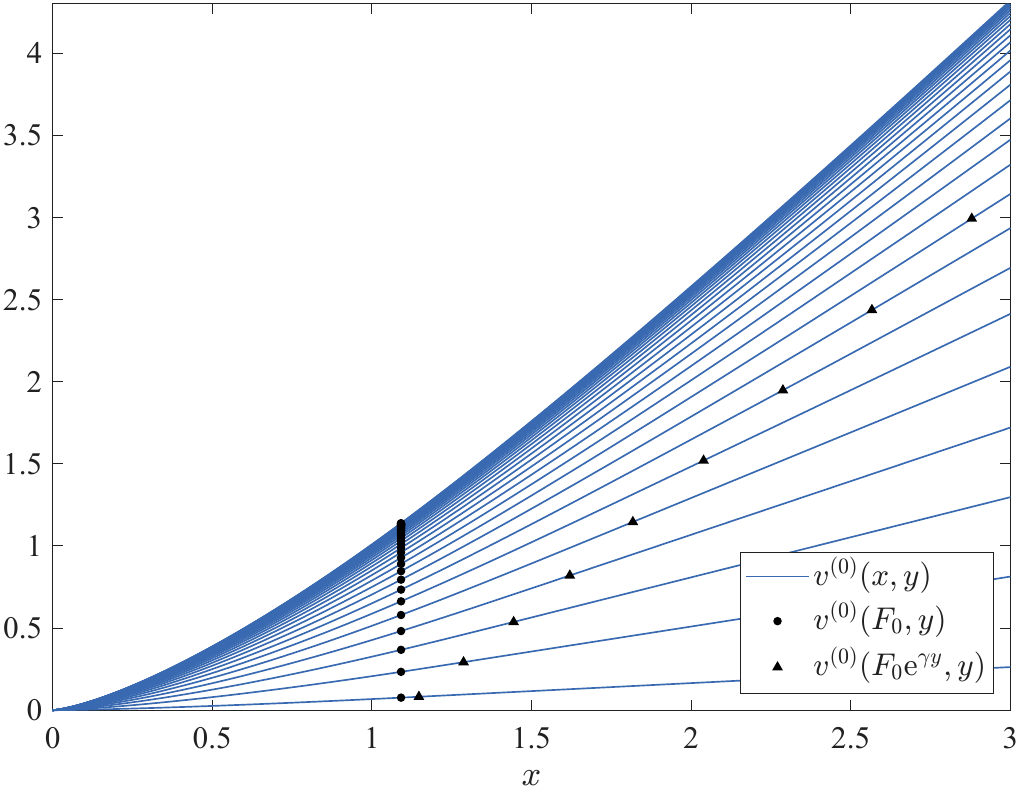}
			\caption{}\label{f1}
		\end{figure}
		In addition, the figures show pointwise evaluations as functions of $y$: In Figure \ref{f1} the black circular makers correspond to  $y\mapsto v^{(0)}(F_{0},y)$, where $F_{0}\approx1.0914$ is determined from \eqref{eq6},  and the black triangular makers to $y\mapsto v^{(0)}(F_{0}\expo^{\gamma y},y)$; Figure \ref{f2} shows evaluations for  $y\mapsto v^{(\lambda)}(G_{\lambda}(y),y)$ (black circles), with $G_{\lambda}$ as in \eqref{eq10},  and $y\mapsto v^{(\lambda)}(\expo^{\gamma y}G_{\lambda}(y),y)$ (black triangles).
		\begin{figure}[h!]
			\centering
			\includegraphics[scale=0.7]{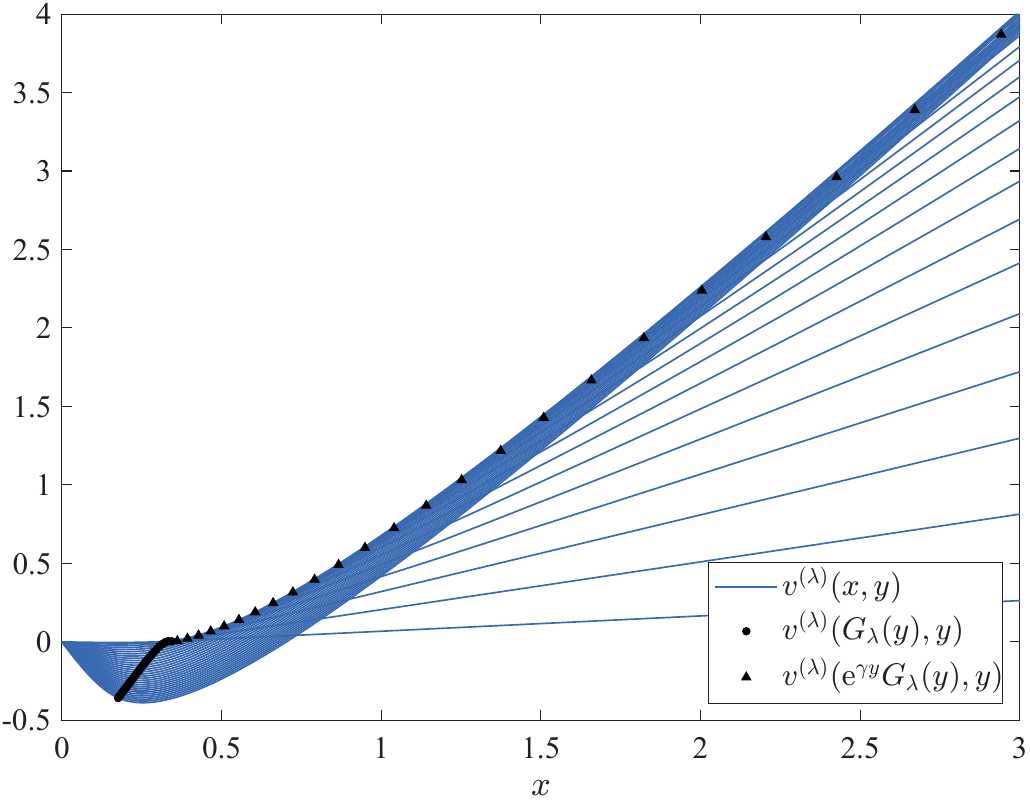}
			\caption{}\label{f2}
		\end{figure}
		
		In figure \ref{f3} depicts the optimal regions which the agent makes the corresponding  decisions when the possibility of default is taking into account.  The plot shows the different boundaries of the set $\partial\mathcal{W}^{(\lambda)}\cup\partial\mathcal{S}_{1}^{(\lambda)}$ (blue lines), with 
		\begin{align*}
			\partial\mathcal{W}^{(\lambda)}\cup\partial\mathcal{S}_{1}^{(\lambda)}&=\{(x,y,w)\in\R_{+}\times\R_{+}\times(-\infty,b): y>0\ \text{and}\ [x=G_{\lambda}(y)\ \text{or}\ x=\expo^{\gamma y}G_{\lambda}(y)] \},
		\end{align*}
		for the default parameter $\lambda=2^{-N}$ with $N\in[-20,8]$ and a step rate size of 0.5. Additionally, the figure shows $\partial\mathcal{W}\cup\partial\mathcal{S}_{1}$ (red line), and $\partial\mathcal{W}^{(\infty)}\cup\partial\mathcal{S}_{1}^{(\infty)}$ (green line), with 
		\begin{align*}
			\partial\mathcal{W}\cup\partial\mathcal{S}_{1}&=\{(x,y,w)\in\R_{+}\times\R_{+}\times[b,\infty): y>0\ \text{and}\ [x=F_{0}\ \text{or}\ x=F_{0}\expo^{\gamma y}] \},\\
			\partial\mathcal{W}^{(\infty)}\cup\partial\mathcal{S}^{(\lambda)}_{1}&=\{(x,y,w)\in\R_{+}\times\R_{+}\times[b,\infty): y>0\ \text{and}\ [x=G_{\infty}(y)\ \text{or}\ x=\expo^{\gamma y}G_{\infty}(y)] \},
		\end{align*}
		where $G_{\infty}(y)\eqdef\frac{ C^{\sell}}{1+K[\gamma y+1]}$.
		
		\begin{figure}[h!]
			\centering
			\includegraphics[scale=0.7]{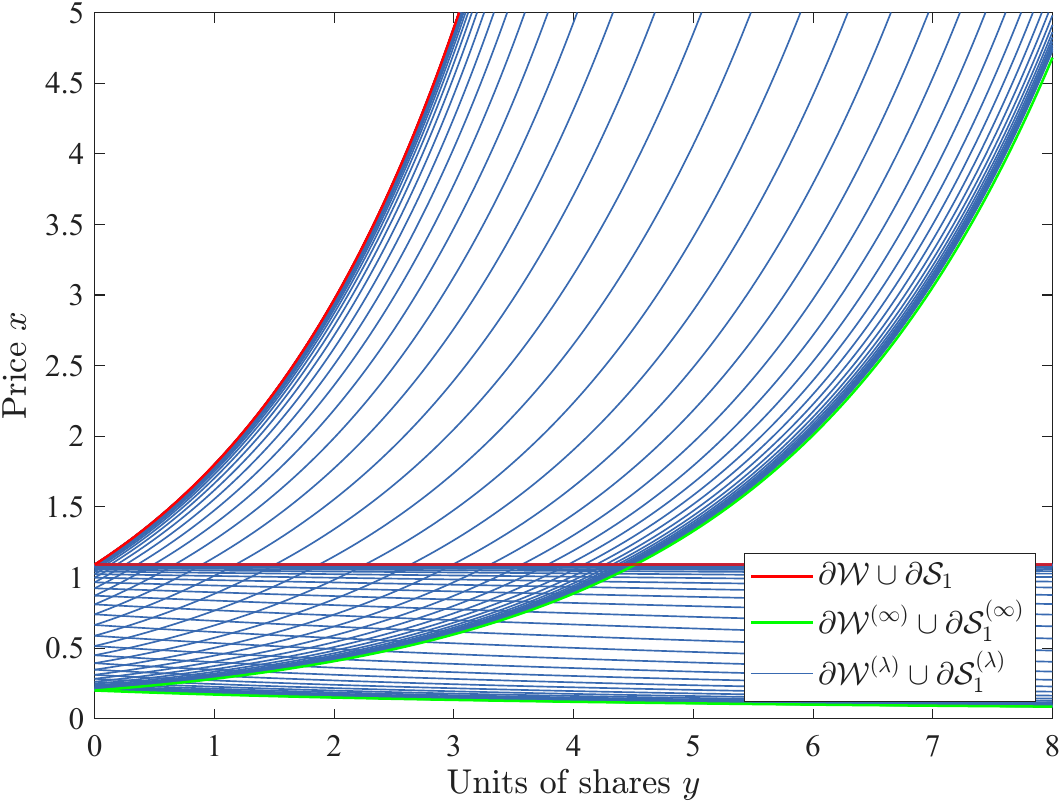}
			\caption{}\label{f3}
		\end{figure}
		In figure \ref{f3} depicts the optimal regions which the agent makes the corresponding  decisions when the possibility of default is taking into account.  The plot shows the different boundaries of the set $\partial\mathcal{W}^{(\lambda)}\cup\partial\mathcal{S}_{1}^{(\lambda)}$ (blue lines), with 
		\begin{align*}
			\partial\mathcal{W}^{(\lambda)}\cup\partial\mathcal{S}_{1}^{(\lambda)}&=\{(x,y,w)\in\R_{+}\times\R_{+}\times(-\infty,b): y>0\ \text{and}\ [x=G_{\lambda}(y)\ \text{or}\ x=\expo^{\gamma y}G_{\lambda}(y)] \},
		\end{align*}
		for the default parameter $\lambda=2^{-N}$ with $N\in[-20,8]$ and a step rate size of 0.5. Additionally, the figure shows $\partial\mathcal{W}\cup\partial\mathcal{S}_{1}$ (red line), and $\partial\mathcal{W}^{(\infty)}\cup\partial\mathcal{S}_{1}^{(\infty)}$ (green line), with 
		\begin{align*}
			\partial\mathcal{W}\cup\partial\mathcal{S}_{1}&=\{(x,y,w)\in\R_{+}\times\R_{+}\times[b,\infty): y>0\ \text{and}\ [x=F_{0}\ \text{or}\ x=F_{0}\expo^{\gamma y}] \},\\
			\partial\mathcal{W}^{(\infty)}\cup\partial\mathcal{S}^{(\lambda)}_{1}&=\{(x,y,w)\in\R_{+}\times\R_{+}\times[b,\infty): y>0\ \text{and}\ [x=G_{\infty}(y)\ \text{or}\ x=\expo^{\gamma y}G_{\infty}(y)] \},
		\end{align*}
		where $G_{\infty}(y)\eqdef\frac{ C^{\sell}}{1+K[\gamma y+1]}$.
		
		\begin{figure}[h!]
			\centering
			\includegraphics[scale=0.7]{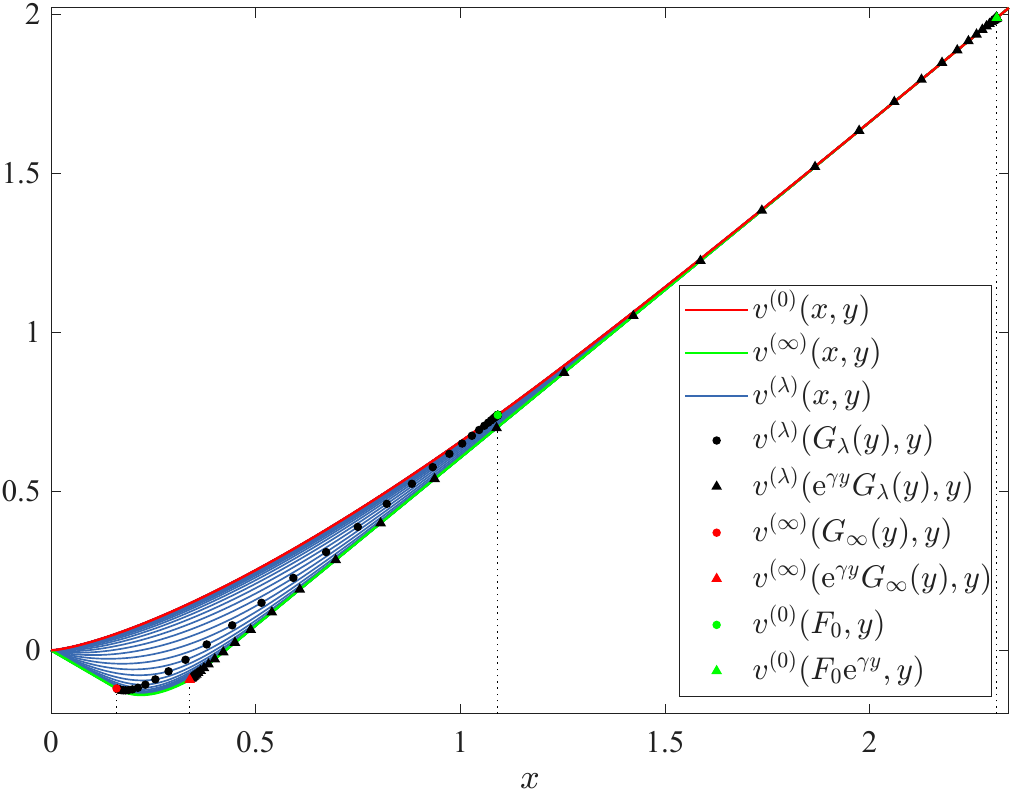}
			\caption{}\label{f4}
		\end{figure}
		To conclude, Figure \ref{f4} shows the graph $x\mapsto v^{(0)}(x,y)$ (red line), $x\mapsto v^{(\lambda)}(x,y)$ (blue line), and $x\mapsto v^{(\infty)}(x,y)$ (green line), for $y=1.5$ and $\lambda=2^{N}$, with $N\in[-20,8]$ and a step rate size of 0.5, where $v^{(\infty)}$ is some limit function.  The figure  also displays pointwise evaluations as functions of $\lambda$:  black circular makers indicate  $\lambda\mapsto v^{(\lambda)}(G_{\lambda}(y),y)$,   and the triangular makers indicate $\lambda\mapsto v^{(\lambda)}(\expo^{\gamma y}G_{\lambda}(y),y)$. Moreover, in Figure \ref{f4}, the red circular and red triangular makers represent  $v^{(\infty)}(G_{\infty}(y),y)$ and $ v^{(\infty)}(\expo^{\gamma y}G_{\infty}(y),y)$. Meanwhile, the green circular and green triangular makers are  $v^{(0)}(F_{0},y)$ and $ v^{(0)}(F_{0}\expo^{\gamma y},y)$.

		\appendix
		\section{Appendix: Proof of Theorem \ref{HJB_proof}}\label{proof_HJB_app}
		%\section{Solution to the HJB equation}
		In this appendix we present the construction of  an explicit  solution for  the HJB equation described in Theorem    \ref{HJB_proof}. It is done  in several steps, as it requires that some technical results are justified.\\
		\textit{Step(i).-Construction of the candidate solution.}\\
		Let $(x,y,w)\in\mathbb{W}$ with $w\geq b$. Then, the first equation in \eqref{HJB1} reduces to
		\begin{align}\label{HJB3.1}
			%\eqdef \frac{1}{2}\sigma^2x^2f_{xx}(x,y)+\mu f_x(x,y)-\left(\lambda \psi(b-b_0)+\delta\right)f(x,y),\\
			\frac{1}{2}\sigma^2x^2 v_{xx}(x,y,w)+\mu x  v_x(x,y,w)+\frac{1}{2}v_{ww}+\rho\sigma x v_{xw}-\delta v(x,y,w)=0.
		\end{align}
		We seek solutions of the form $v(x,y,w)=A(y,w)x^{n_0}$, where $n_0>1$ is the positive root of
		\begin{equation}\label{eq3}
			\frac{1}{2}\sigma^2 l[l-1]+\mu l-\delta=0.
		\end{equation}
		Substituting this ansatz into \eqref{HJB3.1} and using \eqref{eq3}, we obtain
		%\begin{align*}
		%	x^{n_0}A(y)C(w)\frac{1}{2}\sigma^2\left(\frac{1}{2}\sigma^2 n_0(n_0-1)+\mu n_0-\delta\right)+A(y)x^{n_0}\left(\frac{1}{2}C''(w)+\rho\sigma C'(w)\right)=0.
		%\end{align*}
		%\red{
			%	\begin{multline*}
				%		x^{n_0}A(y,w)\frac{1}{2}\sigma^2\left(\frac{1}{2}\sigma^2 n_0(n_0-1)+\mu n_0-\delta\right)\\
				%		+x^{n_0}\left(\frac{1}{2}A_{ww}(y,w)+\rho\sigma n_{0} A_{w}(y,w)\right)=0.
				%	\end{multline*}%}
		%	The form of the previous equation suggests looking at solutions $A$ that satisfy
		\[
		\frac{1}{2}A_{ww}(y,w)+\rho\sigma n_0 A_{w}(y,w)=0,
		\]
		%}
	%\[
	%C(w)=\left(-\frac{1}{2\rho\sigma}\expo^{-2\rho\sigma w}+C\right),
	%\]
	%\red{
		whose general solution is
		\[
		A(y,w)=-\frac{1}{2\rho\sigma n_{0}}\expo^{-2\rho\sigma n_{0} w}\alpha(y)+\beta(y),
		\]%}
	for suitable functions $\alpha,\beta:\R\mapsto\R$. 
	%is a solution to 
	%\[
	%\frac{1}{2}C''(w)+\rho\sigma C'(w)=0.
	%\]
	%\red{
		Thus, a candidate solution to \eqref{HJB1} in this section of $\mathbb{W}$ is
		%\begin{align*}
		%	v(x,y,w)=A(y)x^{n_0}\left(-\frac{1}{2\rho\sigma}\expo^{-2\rho\sigma w}+C\right).
		%\end{align*}
		%\red{
			\begin{align*}
				v(x,y,w)=x^{n_0}\left[-\frac{1}{2\rho\sigma n_{0}}\expo^{-2\rho\sigma n_{0} w}\alpha(y)+\beta(y)\right].
			\end{align*}
			%}
		In the selling region $\mathbb{S}$, the second equation in \eqref{HJB1} takes the form
		\begin{align}\label{exp_0}
			-\gamma xv_x(x,y,w)-v_y(x,y,w)+x- C^{\sell}=0\quad \text{for}\ (x,y,w)\in\mathcal{S},
		\end{align}
		which implies
		\begin{align*}
			-\gamma xv_{xx}(x,y,w)-\gamma v_x(x,y,w)-v_{yx}(x,y,w)+1=0\quad \text{for}\ (x,y,w)\in\mathcal{S}.
		\end{align*}
		We look now for functions $A$ and $F$ such that the associated solution $v$ is twice continuously differentiable in the $x$-variable and continuously differentiable in the $y$-variable.
		These requirements lead to the following system of equations:
		\begin{align*}
			&-\gamma n_0A(y,w)x^{n_0}-A_{y}(y,w)x^{n_0}+x- C^{\sell}\big|_{x=F(y,w)}=0,\notag\\
			&-\gamma n^2_0A(y,w)x^{n_0-1}-n_0A_{y}(y,w)x^{n_0-1}+1\big|_{x=F(y,w)}=0.
		\end{align*}
		Solving this system yields
		\[
		F(y,w)=\frac{n_0 C^{\sell}}{n_0-1}=: F_0,
		\]
		and
		\[
		A(y,w)=\beta(y)=\frac{1}{\gamma n_0^2}\left[\frac{n_0-1}{n_0 C^{\sell}}\right]^{n_0-1}[1-\expo^{-\gamma n_0y}],
		\]
		where the latter follows from the constraint $A(0,w)=0$ for all $w\in\R$.
		Consequently, for $(x,y,w)\in \mathbb{W}$, 
		\[
		v(x,y,w)=\frac{1}{\gamma n_0^2}\left[\frac{n_0-1}{n_0 C^{\sell}}\right]^{n_0-1}[1-\expo^{-\gamma n_0y}]x^{n_0}.
		\]
		
		We now consider the solution in the selling region $\mathbb{W}$. Following Section~5 in Guo and Zervos \cite{ZG15}, for 
		$(x,y,w)\in\mathcal{S}$ with $F_0<x<F_0\expo^{\gamma y}$,
		\begin{align*}
			v(x,y,w)=v(F_0,y-\mathbb{Y}(x,F_0),w)+\frac{1}{\gamma}x\left[1-\expo^{-\gamma \mathbb{Y}(x,F_0)}\right]- C^{\sell}\mathbb{Y}(x,F_0),
		\end{align*}
		where $\mathbb{Y}$ is defined in \eqref{fun_Y}. If $x>F_0\expo^{\gamma y}$, then
		\begin{align*}
			v(x,y,w)=\frac{1}{\gamma}x\left[1-\expo^{-\gamma y}\right]- C^{\sell}y.
		\end{align*}
		The following result summarizes the conclusions above.
		\begin{lemma}
			If $w\geq b$, then $v$ is given by
			\begin{align}\label{sol>b}
				v(x,y,w)=
				\begin{cases}
					\frac{x^{n_0}}{\gamma n_0^2}\left[\frac{n_0-1}{n_0 C^{\sell}}\right]^{n_0-1}[1-\expo^{-\gamma n_0y}]&\text{if}\ 0<x\leq F_{0},\\
					\vspace{-0.4cm}&\\
					v(F_0,y-\mathbb{Y}(x,F_0),w)&\\
					\quad+\frac{x}{\gamma}\left[1-\expo^{-\gamma -\mathbb{Y}(x,F_0)}\right]- C^{\sell}-\mathbb{Y}(x,F_0)&\text{if}\ F_{0} < x\leq F_{0}\expo^{\gamma y},\\
					\vspace{-0.4cm}&\\
					\frac{x}{\gamma}\left[1-\expo^{-\gamma y}\right]- C^{\sell}y&\text{if}\ x>F_{0}\expo^{\gamma y}. %,\\
				\end{cases}
			\end{align}
			which is $\mathcal{C}^2$ in $x$ and  $\mathcal{C}^1$ in $y$.
		\end{lemma}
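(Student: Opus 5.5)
The lemma has two parts. The formula \eqref{sol>b} is just the construction carried out above, so what really needs proof is the regularity claim: $v$ is $\mathcal{C}^2$ in $x$ and $\mathcal{C}^1$ in $y$ on $\{w\geq b\}$. (The stray minus signs in the middle line of \eqref{sol>b} should be read as in \eqref{sol1}.) Set $c_0\eqdef\frac{1}{\gamma n_0^2}\big[\frac{n_0-1}{n_0C^{\sell}}\big]^{n_0-1}$, so that in the waiting region $v=c_0x^{n_0}[1-\expo^{-\gamma n_0 y}]$. I will check smoothness piece by piece and then across the two free boundaries $x=F_0$ and $x=F_0\expo^{\gamma y}$. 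Inside each piece the formulas are clearly smooth, since $\mathbb{Y}(x,F_0)=\frac1\gamma\ln(x/F_0)$ is smooth for $x>0$.

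\textbf{Middle piece.} Here I simplify directly. Using $\expo^{-\gamma\mathbb{Y}(x,F_0)}=F_0/x$ and $\expo^{\gamma\mathbb{Y}(x,F_0)}=x/F_0$, the middle expression becomes
\[
c_0F_0^{n_0}\big[1-(x/F_0)^{n_0}\expo^{-\gamma n_0 y}\big]+\tfrac{1}{\gamma}(x-F_0)-\tfrac{C^{\sell}}{\gamma}\ln(x/F_0).
\]
Since $c_0F_0^{n_0}=F_0/(\gamma n_0^2)$, this equals
\[
\frac{F_0}{\gamma n_0^2}-c_0x^{n_0}\expo^{-\gamma n_0y}+\frac{x-F_0}{\gamma}-\frac{C^{\sell}}{\gamma}\ln\frac{x}{F_0}.
\]
This explicit form makes the remaining checks routine differentiations.

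\textbf{Boundary $x=F_0$.} Subtract the waiting expression $c_0x^{n_0}-c_0x^{n_0}\expo^{-\gamma n_0y}$ from the middle expression. The $y$-dependent terms cancel identically, so the difference is
\[
\phi(x)=\frac{F_0}{\gamma n_0^2}-c_0x^{n_0}+\frac{x-F_0}{\gamma}-\frac{C^{\sell}}{\gamma}\ln\frac{x}{F_0}.
\]
I need $\phi(F_0)=\phi'(F_0)=\phi''(F_0)=0$. The first is immediate. The other two use the identities $n_0c_0F_0^{n_0-1}=1/(\gamma n_0)$ and $F_0-C^{\sell}=F_0/n_0$:
\[
\phi'(F_0)=-\frac{1}{\gamma n_0}+\frac1\gamma-\frac{C^{\sell}}{\gamma F_0}=0,
\qquad
\phi''(F_0)=-\frac{n_0-1}{\gamma n_0F_0}+\frac{C^{\sell}}{\gamma F_0^2}=0.
\]
These are exactly the smooth-fit equations that determined $F_0$ and $\beta$. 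Because the $y$-dependence cancelled, $v_y$ is automatically continuous across $x=F_0$.

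\textbf{Boundary $x=F_0\expo^{\gamma y}$.} Here the middle piece meets $\frac x\gamma(1-\expo^{-\gamma y})-C^{\sell}y$. I will check the value, $v_x$, $v_{xx}$ and $v_y$. At this boundary $x^{n_0}\expo^{-\gamma n_0 y}=F_0^{n_0}$.
\begin{itemize}
\item Values: the middle piece gives $\frac{x-F_0}{\gamma}-C^{\sell}y$, and so does the liquidation formula.
\item $v_x$: the middle piece gives $-\frac{1}{\gamma x}+\frac1\gamma-\frac{C^{\sell}}{\gamma x}$ after using $n_0c_0F_0^{n_0}=F_0/(\gamma n_0)$ and $F_0/n_0+C^{\sell}=F_0$. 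The right side gives $\frac{1}{\gamma}(1-F_0/x)$, which agrees.
\item $v_{xx}$: this reduces to the same identity as $\phi''(F_0)=0$.
\item $v_y$: the middle piece gives $\gamma n_0c_0x^{n_0}\expo^{-\gamma n_0 y}=F_0/n_0$. The right side gives $x\expo^{-\gamma y}-C^{\sell}=F_0-C^{\sell}=F_0/n_0$.
\end{itemize}

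\textbf{Main obstacle.} The only real difficulty is the bookkeeping of these identities at the two boundaries. A slicker alternative is to observe that along each transaction ray $u\mapsto(x\expo^{-\gamma u},y-u)$ the gradient equation \eqref{exp_0} holds, and that \eqref{exp_0} and its $x$-derivative propagate the smooth-fit conditions from $x=F_0$. I would mention this only as motivation and rely on the explicit computation above.
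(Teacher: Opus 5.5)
Your proposal is correct, but it proves the regularity claim by a different route from the paper.

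\textbf{The paper's route.} In the paper the lemma has no separate proof; it summarizes the preceding construction. There, $F_0$ and $\beta$ are obtained by imposing \eqref{exp_0} and its $x$-derivative on the ansatz $A(y,w)x^{n_0}$ at $x=F(y,w)$. The two selling pieces are then written down along transaction rays, following Section~5 of Guo--Zervos. The $\mathcal{C}^2$-in-$x$, $\mathcal{C}^1$-in-$y$ property is asserted as a consequence of this smooth-fit construction and, implicitly, of the Guo--Zervos reference. The fit along the second boundary $x=F_0\expo^{\gamma y}$ is never checked explicitly.

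\textbf{Your route.} You take the formula as given and collapse the middle piece to $\frac{F_0}{\gamma n_0^2}-c_0x^{n_0}\expo^{-\gamma n_0y}+\frac{x-F_0}{\gamma}-\frac{C^{\sell}}{\gamma}\ln\frac{x}{F_0}$. You then check $v$, $v_x$, $v_{xx}$ and $v_y$ at both free boundaries by hand. Each of these identities checks out. The cancellation of the $y$-dependence across $x=F_0$ gives continuity of $v_y$ there for free. Your reading of the stray minus signs in the middle line as in \eqref{sol1} is also right.

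\textbf{Trade-off.} Your approach gives a self-contained proof of the regularity statement that does not lean on the citation. The paper's approach explains where $F_0$ and $\beta$ come from. Its mechanism, propagating \eqref{exp_0} along transaction rays (which you mention only as motivation), does not require the free boundary to be independent of $y$. Your closed-form reduction uses exactly that $F_0$ is constant.

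\textbf{One slip to fix.} In your $v_x$ bullet, the first term of the middle-piece derivative at $x=F_0\expo^{\gamma y}$ should be $-\frac{F_0}{\gamma n_0x}$, not $-\frac{1}{\gamma x}$. This follows from $n_0c_0x^{n_0-1}\expo^{-\gamma n_0y}=n_0c_0F_0^{n_0}/x$. With that correction, the identity $F_0/n_0+C^{\sell}=F_0$ does give $\frac1\gamma(1-F_0/x)$, as you claim.
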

		
		We now turn our attention to the solution of \eqref{HJB1} in the region $\mathcal{W}^{(\lambda)}$, when $w<b$, where the HJB equation becomes
		%\begin{align*}
		%	\frac{1}{2}\sigma^2x^2 v_{xx}(x,y,w)+\mu x  v_x(x,y,w)+\frac{1}{2}v_{ww}+\rho\sigma x v_{xw}-(\lambda+\delta) v(x,y,w)=-\lambda Kxy.
		%\end{align*}
		%\red{
			\begin{equation*}
				\frac{1}{2}\sigma^2x^2 v_{xx}(x,y,w)+\mu x  v_x(x,y,w)+\frac{1}{2}v_{ww}+\rho\sigma x v_{xw}-[\lambda+\delta] v(x,y,w)=\lambda Kxy.
			\end{equation*}%}
		By Assumption \eqref{cond_drift}, $\delta+\lambda>\mu$, a solution to this equation is given by
		%\[
		%v(x,y,w)=B(y)x^{n_1}C(w)-\frac{\lambda K}{\mu-\delta-\lambda}xy,
		%\]
		%\red{
			\[
			v(x,y,w)=x^{n_1}B(y,w)-\frac{\lambda K}{\delta+\lambda-\mu}xy,
			\]%}
		where $n_1$ is the positive root of 
		\begin{equation}\label{def_n1}
			\frac{1}{2}\sigma^2 l[l-1]+\mu l-[\lambda+\delta]=0.
		\end{equation}
		Moreover $B(y,w)$ is a solution to
		\begin{equation*}
			\frac{1}{2}B_{ww}(y,w)+\rho\sigma n_{1} B_{w}(y,w)=0,
		\end{equation*}
		whose general form is
		\[
		B(y,w)=-\frac{1}{2\rho\sigma n_{1}}\expo^{-2\rho\sigma n_{1} w}\bar{\alpha}(y)+\bar{\beta}(y),
		\]
		We look for functions $B$ and $G_{\lambda}$ such that the associated solution $v$ is twice continuously differentiable in $x$ and continuously differentiable in $y$. This requirement leads to the following system of equations:
		%\begin{align*}
		%	-\gamma n_1B(y)C(w)x^{n_1}+\gamma xy\frac{\lambda K}{\mu-\delta-\lambda}-B'(y)x^{n_1}C(w)+\gamma x\frac{\lambda K}{\mu-\delta-\lambda}+x- C^{\sell}\Big|_{G_{\lambda}(y)}&=0\notag\\
		%	-\gamma n_1^2B(y)C(w)x^{n_1-1}+\gamma y\frac{\lambda K}{\mu-\delta-\lambda}-B'(y)n_1x^{n_1-1}C(w)+\frac{\lambda K}{\mu-\delta-\lambda}+\gamma x\frac{\lambda K}{\mu-\delta-\lambda}\notag\\+1\Big|_{G_{\lambda}(y)}&=0.
		%\end{align*}
		%\red{
			\begin{align*}
				&-\gamma n_1B(y,w)x^{n_1}+\gamma xy\frac{\lambda K}{\delta+\lambda-\mu}-B_{y}(y,w)x^{n_1}+ x\frac{\lambda K}{\delta+\lambda-\mu}+x- C^{\sell}\Big|_{x=G_{\lambda}(y,w)}=0,\notag\\
				&-\gamma n_1^2B(y,w)x^{n_1-1}+\gamma y\frac{\lambda K}{\delta+\lambda-\mu}-B_{y}(y,w)n_1x^{n_1-1}+\frac{\lambda K}{\delta+\lambda-\mu}+1\Big|_{x=G_{\lambda}(y,w)}=0.
			\end{align*}%}
		
		Solving this system yields
		\begin{align}\label{fun_g}
			G_{\lambda}(y)\eqdef G_{\lambda}(y,w)&=\frac{n_1 C^{\sell}}{\displaystyle(n_1-1)\left(1+\frac{\lambda K}{\delta+\lambda-\mu}(\gamma y+1)\right)}
		\end{align}
		which shows in particular that the free boundary does not depend on $w$.
		Finally, using the constraint $B(0,w)=0$ for all $w\in\R$, we obtain
		\[
		B(y)=\bar{\beta}(y)=\expo^{-\gamma n_1y}\int_0^y\frac{ C^{\sell}\expo^{\gamma n_1u}}{(n_1-1)G^{n_1}_{\lambda}(u)}\diff u.
		\]
		
		We now focus on characterizing the solution to \eqref{HJB1} in the selling
		region $\mathcal{S}^{(\lambda)}$. Following again Section 5 in Guo and Zervos\cite{ZG15}, for
		$(x,y,w)\in\mathcal{S}^{(\lambda)}$ with $G_{\lambda}(y)<x<G_{\lambda}(y)\expo^{\gamma y}$, we have
		\begin{align*}%\label{exp_1}
			v(x,y,w)=v(G_{\lambda}(y),y-\mathbb{Y}(x,y),w)+\frac{1}{\gamma}x\left[1-\expo^{-\gamma \mathbb{Y}(x,y)}\right]- C^{\sell}\mathbb{Y}(x,y),
		\end{align*}
		where $\mathbb{Y}$ is defined in \eqref{fun_Y}.
		%\[
		%\mathbb{Y}(x,y)\eqdef\frac{1}{\gamma}\ln\frac{x}{G_{\lambda}(y)}.
		%\]
		
		Additionally, for $(x,y,w)\in\mathcal{S}^{(\lambda)}$ with $x>G_{\lambda}(y)\expo^{\gamma y}$,
		\begin{align*}
			v(x,y,w)=\frac{x}{\gamma}\left[1-\expo^{-\gamma y}\right]- C^{\sell}y.
		\end{align*}
		The following result summarizes the above conclusions.
		\begin{lemma}
			Thus, if $w< b$, then $v$ is given by
			\begin{align}\label{sol<b}
				v(x,y,w)=
				\begin{cases}
					x^{n_1}B(y)-\frac{\lambda K}{\delta+\lambda-\mu}xy&\text{if}\ 0<x\leq G_{\lambda}(y),\\
					\vspace{-0.4cm}&\\
					v(G_{\lambda}(y),y-\mathbb{Y}(x,G_{\lambda}(y)),w)&\\
					\quad+\frac{x}{\gamma}\left[1-\expo^{-\gamma \mathbb{Y}(x,G_{\lambda}(y))}\right]&\\\quad- C^{\sell}\mathbb{Y}(x,G_{\lambda}(y))&\text{if}\ G_{\lambda}(y) < x\leq G_{\lambda}(y)\expo^{\gamma y},\\
					\vspace{-0.4cm}&\\
					\frac{x}{\gamma}\left[1-\expo^{-\gamma y}\right]- C^{\sell}y&\text{if}\ x>G_{\lambda}(y)\expo^{\gamma y}.
				\end{cases}
			\end{align}
			which is $\mathcal{C}^2$ in $x$ and  $\mathcal{C}^1$ in $y$.
		\end{lemma}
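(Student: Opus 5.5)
We verify the claim region by region. The proof has three parts: smoothness inside each of the three pieces of \eqref{sol<b}, smooth fit across the inner boundary $x=G_\lambda(y)$, and smooth fit across the outer boundary $x=G_\lambda(y)\expo^{\gamma y}$.

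\emph{Interior of each piece.} Inside each piece, regularity is immediate from the formulas.
\begin{itemize}
\item On $\{0<x<G_\lambda(y)\}$ the function $x^{n_1}B(y)-\frac{\lambda K}{\delta+\lambda-\mu}xy$ is $\mathcal C^\infty$ in $x$. It is $\mathcal C^1$ in $y$ because $B$ in \eqref{eq10.1} is an integral of a continuous integrand; note $G_\lambda$ in \eqref{fun_g} is smooth and positive.
\item On $\{x>G_\lambda(y)\expo^{\gamma y}\}$ the full-liquidation formula is smooth.
\item For the middle region we write $u\eqdef\mathbb{Y}(x,G_\lambda(y))=\frac1\gamma\ln\frac{x}{G_\lambda(y)}$ and note that $\frac{x}{\gamma}[1-\expo^{-\gamma u}]=\frac{x-G_\lambda(y)}{\gamma}$. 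Setting $\phi(a,z)\eqdef a^{n_1}B(z)-\frac{\lambda K}{\delta+\lambda-\mu}az$, the middle expression reads
\[
v(x,y)=\phi\big(G_\lambda(y),y-u\big)+\frac{x-G_\lambda(y)}{\gamma}-C^{\sell}u .
\]
This is a composition of $\mathcal C^1$ maps in $y$ and of $\mathcal C^2$ maps in $x$, provided $B\in\mathcal C^2$. That holds because $B'$ is given explicitly by the linear ODE $B'=-\gamma n_1B+C^{\sell}/((n_1-1)G_\lambda^{n_1})$.
\end{itemize}
From the display we also get the closed forms we will need:
\begin{align*}
v_x&=\frac1\gamma-\frac{\phi_z(G_\lambda(y),y-u)+C^{\sell}}{\gamma x},\\
v_y&=\phi_a\,G_\lambda'(y)+\phi_z\Big(1+\frac{G'_\lambda(y)}{\gamma G_\lambda(y)}\Big)-\frac{G_\lambda'(y)}{\gamma}+\frac{C^{\sell}G_\lambda'(y)}{\gamma G_\lambda(y)},
\end{align*}
and $v_{xx}$ by differentiating the first line once more.

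\emph{Inner boundary $x=G_\lambda(y)$.} Here $u=0$, so the middle formula reduces to $\phi(G_\lambda(y),y)$, and continuity is clear. Matching $v_x$ from both sides is equivalent to
\[
-\gamma G_\lambda v_x-v_y+G_\lambda-C^{\sell}=0,
\]
evaluated with the waiting-region expression. This is exactly the first smooth-fit equation that was solved to produce \eqref{fun_g} and $B$. Matching $v_{xx}$ reduces in the same way to the second (differentiated) smooth-fit equation. For $v_y$, the extra terms proportional to $G'_\lambda(y)$ in the displayed formula collect into $G'_\lambda(y)\,\big[\phi_a-\frac1\gamma+\frac{\phi_z+C^{\sell}}{\gamma G_\lambda}\big]$. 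This bracket vanishes by the same first smooth-fit identity, so $v_y$ agrees with $\phi_z(G_\lambda(y),y)$ from the left.

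\emph{Outer boundary $x=G_\lambda(y)\expo^{\gamma y}$.} Here $u=y$ and $\phi(G_\lambda(y),0)=0$ because $B(0)=0$, so the values coincide with $\frac{x}{\gamma}[1-\expo^{-\gamma y}]-C^{\sell}y$. Matching $v_x$ requires
\[
\phi_z(G_\lambda(y),0)+C^{\sell}=x\expo^{-\gamma y}=G_\lambda(y).
\]
Matching $v_y$ requires a companion identity involving $G_\lambda'$. Matching $v_{xx}$ (the right side has $v_{xx}=0$) requires $\phi_{zz}(G_\lambda(y),0)=0$.

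This junction is the main obstacle. The quantity $\phi_z(G_\lambda(y),0)=G_\lambda(y)^{n_1}B'(0)-\frac{\lambda K}{\delta+\lambda-\mu}G_\lambda(y)$ involves $B'(0)=C^{\sell}/((n_1-1)G_\lambda(0)^{n_1})$. So the required identity mixes $G_\lambda(y)$ with $G_\lambda(0)$, whereas the landing point in \eqref{sol<b} is frozen at $G_\lambda(y)$. I would first substitute \eqref{fun_g} and $B'$, $B''$ from the ODE, and try to reduce all three conditions to algebraic identities in $n_1$ using \eqref{def_n1}. If they do not hold identically, the obstruction must be recorded precisely. Nothing breaks in the $w\ge b$ case of Lemma \eqref{sol>b}, where $F_0$ is constant and the analogous check is the Guo--Zervos computation.
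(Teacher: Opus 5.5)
Your treatment of the interior of each piece and of the inner boundary $x=G_\lambda(y)$ is correct. For $v_{xx}$ you need the second smooth-fit identity together with the first one differentiated along $x=G_\lambda(y)$, which gives $\phi_{zz}+\gamma G_\lambda\phi_{az}=0$ there.

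The gap is that you stop at the outer junction $x_0=G_\lambda(y)\expo^{\gamma y}$, and the check you defer fails. The lemma is false at that junction, so no argument can complete the proof. The paper does not examine this junction either: its justification is the Step (i) construction, which copies the Guo--Zervos selling-region formulas with $F_0$ replaced by $G_\lambda(y)$.

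Concretely, put $\kappa=\lambda K/(\delta+\lambda-\mu)>0$ and $r=G_\lambda(y)/G_\lambda(0)=(1+\kappa)/(1+\kappa(1+\gamma y))$, which lies in $(0,1)$ for $y>0$. Using $B'(0)=C^{\sell}/((n_1-1)G_\lambda(0)^{n_1})$ and $(1+\kappa)G_\lambda(0)=n_1C^{\sell}/(n_1-1)$, one gets
\[
\phi_z(G_\lambda(y),0)+C^{\sell}-G_\lambda(y)=\frac{C^{\sell}}{n_1-1}\,q(r),\qquad q(r)=r^{n_1}-n_1r+n_1-1 .
\]
Here $q>0$ on $(0,1)$, since $r^{n_1}$ lies strictly above its tangent line at $r=1$. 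Your own formulas for $v_x$ and $v_y$ then give
\[
v_x(x_0-,y)-v_x(x_0+,y)=-\frac{C^{\sell}q(r)}{\gamma(n_1-1)x_0},\qquad v_y(x_0-,y)-v_y(x_0+,y)=\frac{C^{\sell}q(r)}{n_1-1}\cdot\frac{1+\kappa\gamma y}{1+\kappa(1+\gamma y)} .
\]
Both are nonzero for every $y>0$. So $v$ is not even $\mathcal{C}^1$ in $x$ or in $y$ across $x=G_\lambda(y)\expo^{\gamma y}$ unless $\lambda K=0$.

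Your second-order condition at this junction is also misstated. In the middle piece $v_{xx}=\frac{1}{\gamma x^2}\bigl[\phi_z+C^{\sell}+\gamma^{-1}\phi_{zz}\bigr]$, so the requirement is $\gamma(\phi_z+C^{\sell})+\phi_{zz}=0$, not $\phi_{zz}=0$. Already in the Guo--Zervos case $\phi_{zz}(F_0,0)=-\gamma F_0\neq 0$.

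The cause is the one you sensed. Selling $u$ shares moves $(x,y)$ to $(x\expo^{-\gamma u},y-u)$, so the amount that brings the state onto the free boundary solves $x\expo^{-\gamma u}=G_\lambda(y-u)$, not $u=\mathbb{Y}(x,G_\lambda(y))$. Write $\mathbb{Y}$ for $\mathbb{Y}(x,G_\lambda(y))$. Because $G_\lambda$ is strictly decreasing, the landing point $(G_\lambda(y),y-\mathbb{Y})$ used in the lemma lies strictly inside $\mathcal{W}^{(\lambda)}$ whenever $0<\mathbb{Y}<y$. At the outer junction the landing point is $(G_\lambda(y),0)$ rather than the boundary point $(G_\lambda(0),0)$, so the smooth-fit identities cannot be invoked there. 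The copied formula is legitimate only for a constant boundary such as $F_0$.

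The same slip undermines the later claim $\Xi^{(1)}_y[v]=0$ on $\mathcal{S}^{(\lambda)}_2$. The bracket there should be the waiting-region gradient expression at $(G_\lambda(y),y-\mathbb{Y})$, which equals $-\frac{C^{\sell}}{n_1-1}q\bigl(G_\lambda(y)/G_\lambda(y-\mathbb{Y})\bigr)<0$. It is multiplied by $G_\lambda'(y)/(\gamma G_\lambda(y))<0$, so in fact $\Xi^{(1)}_y[v]>0$ there.

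A provable version of the lemma uses the correct landing rule:
\begin{itemize}
\item For $G_\lambda(y)<x<G_\lambda(0)\expo^{\gamma y}$, set $v=\phi(G_\lambda(z),z)+\frac{x-G_\lambda(z)}{\gamma}-C^{\sell}(y-z)$, where $z\in(0,y)$ is the unique root of $G_\lambda(z)\expo^{-\gamma z}=x\expo^{-\gamma y}$.
\item For $x\ge G_\lambda(0)\expo^{\gamma y}$, liquidate fully.
\end{itemize}
For this function the matchings at both junctions reduce to the smooth-fit identities at genuine boundary points. At the outer junction they become $\phi_z(G_\lambda(0),0)+C^{\sell}=G_\lambda(0)$ and $\phi_{az}(G_\lambda(0),0)=1$. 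With these, the $\mathcal{C}^2$-in-$x$ and $\mathcal{C}^1$-in-$y$ claim holds.
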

		\textit{Step(ii).- Verification that $v$ solves \eqref{HJB1}}\\
		We now verify that the function defined in Proposition~\ref{HJB_proof} (constructed in the previous step) is indeed a solution to the HJB equation given in \eqref{HJB1}.
		First, we note that in the case $w>b$, the function $v$ is given by \eqref{sol>b}. Thus, by the proof of Proposition~5.1 in Guo and Zervos\cite{ZG15}, the function $v$ is a solution of \eqref{HJB1} in the region $\{w\geq b\}$.
		It therefore suffices to verify that $v$ is a solution of \eqref{HJB1} in the case $w< b$. We analyze each of the regions $\mathcal{W}^{(\lambda)}$, $\mathcal{S}_1^{(\lambda)}$, and $\mathcal{S}_2^{(\lambda)}$ separately.

		\bigskip
		\textit{Region $\mathcal{W}^{(\lambda)}$.-}
		By the construction developed in Step~(i), we have that $v$ satisfies \eqref{HJB_0_wait} in the region $\mathcal{W}^{(\lambda)}$, that is,
		\begin{equation}\label{xi_2}
			\Xi^{(2)}_{y}[v]\eqdef\mathcal{L}v(x,y,w)-\lambda v(x,y,w)+Kxy=0.
		\end{equation}
		In order to show that $v$ satisfies \eqref{HJB1} in the region $\mathcal{W}^{(\lambda)}$, it suffices to verify that
		\begin{equation}\label{xi_1}
			\Xi^{(1)}_{y}[v](x)\eqdef-\gamma xv_x(x,y,w)-v_y(x,y,w)+x- C^{\sell}\leq0
		\end{equation}
		holds in $\mathcal{W}^{(\lambda)}$. 
		
		To this end, using the expression for $v$ in the region $w< b$ given in \eqref{sol<b}, we obtain for $x\in(0,G_{\lambda}(y))$,
		%Dado que $v(x,y,w)=x^{n_{1}}B(y)+\frac{\lambda K}{\mu-\delta-\lambda}xy$ sobre $(0,G_{\lambda}(y))$, tenemos que 
		\begin{align*}
			\Xi^{(1)}_{y}[v](x)&=-x^{n_{1}}[\gamma n_{1} B(y)+B'(y)]-\dfrac{\lambda K x}{\mu-\delta-\lambda}[\gamma y +1]+x-C^{\sell}\notag\\
			&=-x^{n_{1}}\dfrac{C^{\sell}}{[n_{1}-1][G_{\lambda}(y)]^{n_{1}}}-\dfrac{\lambda K x}{\mu-\delta-\lambda}[\gamma y+1]+x-C^{\sell}\notag\\
			&=\dfrac{C^{\sell}}{n_{1}-1}\bigg[-\bigg[\dfrac{x}{G_{\lambda}(y)}\bigg]^{n_{1}}+n_1\dfrac{x}{G_{\lambda}(y)}\bigg]-C^{\sell}.
		\end{align*}
		Since $n_1>1$ (by definition of $n_1$ as the positive root of \eqref{def_n1}), it is straightforward to verify that the mapping
		$x\mapsto \Xi^{(1)}_{y}[v](x)$ is non-decreasing on $(0,G_{\lambda}(y))$. Moreover,
		$\Xi^{(1)}_{y}[v](G_{\lambda}(y))=0$. Hence, \eqref{xi_1} holds in $\mathcal{W}^{(\lambda)}$.
		Together with \eqref{xi_2}, this implies that $v$ solves \eqref{HJB1} in the region $\mathcal{W}^{(\lambda)}$.
		
		\bigskip
		\textit{Region $\mathcal{S}^{(\lambda)}_2$.-}
		We now verify that the function $v$ defined in \eqref{sol<b} solves \eqref{HJB1} in the region $\mathcal{S}^{(\lambda)}_2$. To this end, using \eqref{sol<b}, we obtain the following computation.
		\begin{align*}
			\Xi^{(1)}_{y}[v](x)&=\gamma xv_y(G_{\lambda}(y),y-\mathbb{Y}(x,y),w)\frac{1}{\gamma x}-x\left[1-\expo^{-\gamma \mathbb{Y}(x,y)}\right]\notag\\&-x\left[\expo^{-\gamma \mathbb{Y}(x,y)}\right]+ C^{\sell}-v_x(G_{\lambda}(y),y-\mathbb{Y}(x,y),w)G'(y)\notag\\
			&-v_y(G_{\lambda}(y),y-\mathbb{Y}(x,y),w)\left(1+\frac{G'(y)}{\gamma G_{\lambda}(y)}\right)\notag\\
			&+x\frac{G'(y)}{\gamma G_{\lambda}(y)}- C^{\sell}\frac{G'(y)}{\gamma G_{\lambda}(y)}+x- C^{\sell}\notag\\
			&=\Big(-v_x(G_{\lambda}(y),y-\mathbb{Y}(x,y),w)\gamma G_{\lambda}(y)\notag\\&-v_y(G_{\lambda}(y),y-\mathbb{Y}(x,y),w)+x- C^{\sell}\Big)\frac{G'(y)}{\gamma G_{\lambda}(y)}=0,
		\end{align*}
		where the last equality follows from the fact that $v$ satisfies \eqref{exp_0}, by the smooth fit condition at $x=G_{\lambda}(y)$. Hence, $v$ solves \eqref{HJB_0_sell} in $\mathcal{S}^{(\lambda)}_2$. It just remains to prove that it solves \eqref{HJB_0_wait} in $\mathcal{S}^{(\lambda)}_2$. To this end, note that by \eqref{sol<b}, we obtain for $x\in(G_{\lambda}(y),G_{\lambda}(y)\expo^{\gamma y})$
		\begin{align}\label{fd_s_1}
			v_{x}(x,y,w)&=-\frac{1}{\gamma x}v_y(G_{\lambda}(y),y-\mathbb{Y}(x,y),w)+\frac{1}{\gamma }\left[1-\expo^{-\gamma \mathbb{Y}(x,y)}\right]+\frac{1}{\gamma }\left[\expo^{-\gamma \mathbb{Y}(x,y)}\right]-\frac{1}{\gamma x} C^{\sell}\notag\\
			&=-\frac{1}{\gamma x}[v_y(G_{\lambda}(y),y-\mathbb{Y}(x,y),w)+C^{\sell}]+\frac{1}{\gamma }\notag\\
			&=-\frac{1}{\gamma x}\left[G_{\lambda}(y)^{n_{1}}B'(y-\mathbb{Y}(x,y))+\dfrac{\lambda K}{\mu-\delta-\lambda}G_{\lambda}(y)+C^{\sell}\right]+\frac{1}{\gamma}
		\end{align}
		and noticing that 
		$$ n_{1}B'(y)+\frac{1}{\gamma}B''(y)=-\frac{1}{\gamma}\bigg[\frac{n_{1}C^{\sell}}{n_{1}-1}\bigg]\frac{G'(y)}{G_{\lambda}(y)^{n_{1}+1}}=-\bigg[\frac{\lambda K}{\mu-\delta-\lambda}\bigg]\frac{1}{G_{\lambda}(y)^{n_{1}-1}},$$ 
		it follows that 
		\begin{align}\label{fd_s_2}
			v_{xx}(x,y,w)
			%&=-\frac{1}{\gamma x^{2}}[x\partial_{x}v_y(G_{\lambda}(y),y-\mathbb{Y}(x,y),w)\notag\\&-[v_y(G_{\lambda}(y),y-\mathbb{Y}(x,y),w)+C^{\sell}]]\notag\\
			%&=-\frac{1}{\gamma x^{2}}[-x\mathbb{Y}_{x}(x,y)v_{yy}(G_{\lambda}(y),y-\mathbb{Y}(x,y),w)\notag\\&-[v_y(G_{\lambda}(y),y-\mathbb{Y}(x,y),w)+C^{\sell}]]\notag\\
			&=\frac{1}{\gamma x^{2}}\bigg[\dfrac{1}{\gamma}G_{\lambda}(y)^{n_{1}} B''(y-\mathbb{Y}(x,y))+G_{\lambda}(y)^{n_{1}}B'(y-\mathbb{Y}(x,y))\notag\\&+\dfrac{\lambda K}{\mu-\delta-\lambda}G_{\lambda}(y)+C^{\sell}\bigg]\notag\\
			&=\frac{1}{\gamma x^{2}}\bigg[-[n_{1}-1]G_{\lambda}(y)^{n_{1}}B'(y-\mathbb{Y}(x,y))+C^{\sell}\notag\\
			&-\frac{\lambda K}{\mu-\delta-\lambda}\bigg[\bigg[\frac{G_{\lambda}(y)}{G_{\lambda}(y-\mathbb{Y}(x,y))}\bigg]^{n_{1}}G_{\lambda}(y-\mathbb{Y}(x,y))-G_{\lambda}(y)\bigg]\bigg]\notag\\
			%&=\frac{1}{\gamma x^{2}}\bigg[-[n_{1}-1]G_{\lambda}(y)^{n_{1}}\bigg[-\gamma n_{1} B(y-\mathbb{Y}(x,y))\notag\\&+\bigg[\dfrac{C^{\sell}}{n_{1}-1}\bigg]\frac{1}{G_{\lambda}(y-\mathbb{Y}(x,y))^{n_{1}}}\bigg]+C^{\sell}\notag\\
			%&-\frac{\lambda K}{\mu-\delta-\lambda}\bigg[\bigg[\frac{G_{\lambda}(y)}{G_{\lambda}(y-\mathbb{Y}(x,y))}\bigg]^{n_{1}}G_{\lambda}(y-\mathbb{Y}(x,y))-G_{\lambda}(y)\bigg]\bigg]\notag\\
			&=\frac{1}{\gamma x^{2}}\bigg[\gamma n_{1}[n_{1}-1]G_{\lambda}(y)^{n_{1}} B(y-\mathbb{Y}(x,y))-\frac{G_{\lambda}(y)^{n_{1}}C^{\sell}}{G_{\lambda}(y-\mathbb{Y}(x,y))^{n_{1}}}+C^{\sell}\notag\\
			&-\frac{\lambda K}{\mu-\delta-\lambda}\bigg[\bigg[\frac{G_{\lambda}(y)}{G_{\lambda}(y-\mathbb{Y}(x,y))}\bigg]^{n_{1}}G_{\lambda}(y-\mathbb{Y}(x,y))-G_{\lambda}(y)\bigg]\bigg].
		\end{align}
		Therefore, using \eqref{fd_s_1} and \eqref{fd_s_2}, we obtain for $x\in(G_{\lambda}(y),G_{\lambda}(y)\expo^{\gamma y})$,
		\begin{align}\label{xi_2_s_2}
			\Xi^{(2)}_{y}[v](x)
			&=\frac{\sigma^{2}}{2\gamma}\bigg[-[n_{1}-1]G_{\lambda}(y)^{n_{1}}\bigg[-\gamma n_{1} B(y-\mathbb{Y}(x,y))\notag\\
			&\quad+\bigg[\dfrac{C^{\sell}}{n_{1}-1}\bigg]\frac{1}{G_{\lambda}(y-\mathbb{Y}(x,y))^{n_{1}}}\bigg]+C^{\sell}\notag\\
			&\quad-\frac{\lambda K}{\mu-\delta-\lambda}\bigg[\bigg[\frac{G_{\lambda}(y)}{G_{\lambda}(y-\mathbb{Y}(x,y))}\bigg]^{n_{1}}G_{\lambda}(y-\mathbb{Y}(x,y))-G_{\lambda}(y)\bigg]\bigg]\notag\\
			&\quad+\frac{\mu}{\gamma}\bigg[-\bigg[G_{\lambda}(y)^{n_{1}}\bigg[-\gamma n_{1} B(y-\mathbb{Y}(x,y))\notag\\
			&\quad+\bigg[\dfrac{C^{\sell}}{n_{1}-1}\bigg]\frac{1}{G_{\lambda}(y-\mathbb{Y}(x,y))^{n_{1}}}\bigg]\notag\\
			&\quad+\dfrac{\lambda K}{\mu-\delta-\lambda}G_{\lambda}(y)+C^{\sell}\bigg]+x\bigg]-(\lambda+\delta) v(x,y,w)-\lambda Kxy\notag\\
			&=-\frac{1}{\gamma}\bigg[\frac{\lambda+\delta}{n_{1}}\bigg]\bigg[\dfrac{C^{\sell}}{n_{1}-1}\bigg]\frac{G_{\lambda}(y)^{n_{1}}}{G_{\lambda}(y-\mathbb{Y}(x,y))^{n_{1}}}\notag\\
			&\quad-\frac{\sigma^{2}}{2\gamma}\frac{\lambda K}{\mu-\delta-\lambda}\bigg[\frac{G_{\lambda}(y)}{G_{\lambda}(y-\mathbb{Y}(x,y))}\bigg]^{n_{1}}G_{\lambda}(y-\mathbb{Y}(x,y))\notag\\
			&\quad-\frac{1}{\gamma}\bigg[\mu-\frac{\sigma^{2}}{2}\bigg]\bigg[\dfrac{\lambda K}{\mu-\delta-\lambda}G_{\lambda}(y)+C^{\sell}\bigg]+x\frac{\mu}{\gamma}\notag\\
			&\quad-(\lambda+\delta) \bigg[\frac{\lambda K}{\mu-\delta-\lambda}G_{\lambda}(y)[y-\mathbb{Y}(x,y)]+\frac{1}{\gamma}\left[x-G_{\lambda}(y)\right]\notag\\&\quad- C^{\sell}\mathbb{Y}(x,y)\bigg]-\lambda Kxy.
		\end{align}
		In order to verify that $v$ satisfies \eqref{HJB1} in $\mathcal{S}^{(\lambda)}_2$, it suffices to show that $\Xi^{(2)}_{y}[v](x)\leq 0$ for $x\in(G_{\lambda}(y),G_{\lambda}(y)\expo^{\gamma y})$. To this end, we show that the function $x\mapsto \Xi^{(2)}_{y}[v](x)$ is non-decreasing for $x\in(G_{\lambda}(y),G_{\lambda}(y)\expo^{\gamma y})$, and that $\Xi^{(2)}_{y}[v](G_{\lambda}(y)+)=0$. We start by proving the latter. Note that by \eqref{xi_2_s_2},
		%\red{Hay que mostrar que es decreciente y que $\Xi^{(2)}_{y}[v](G_{\lambda}(y)+)=0$}
		%
		%\begin{align*}
		%\Xi^{(2)}_{y}[v](G_{\lambda}(y)+)&=-\frac{1}{\gamma}\bigg[\frac{\lambda+\delta}{n_{1}}\bigg]\bigg[\dfrac{C^{\sell}}{n_{1}-1}\bigg]-\frac{\sigma^{2}}{2\gamma}\frac{\lambda K}{\mu-\delta-\lambda}G_{\lambda}(y)\notag\\
		%&\quad-\frac{1}{\gamma}\bigg[\mu-\frac{\sigma^{2}}{2}\bigg]\bigg[\dfrac{\lambda K}{\mu-\delta-\lambda}G_{\lambda}(y)+C^{\sell}\bigg]+G_{\lambda}(y)\frac{\mu}{\gamma}\notag\\
		%&\quad-(\lambda+\delta) \bigg[\frac{\lambda K}{\mu-\delta-\lambda}G_{\lambda}(y)y\bigg]-\lambda KG_{\lambda}(y)y\notag\\
		%&=-\frac{1}{\gamma}\bigg[\frac{\lambda+\delta}{n_{1}}\bigg]\bigg[\dfrac{C^{\sell}}{n_{1}-1}\bigg]-\frac{1}{\gamma}\bigg[\mu-\frac{\sigma^{2}}{2}\bigg] C^{\sell}\notag\\
		%&\quad-\frac{\mu}{\gamma}\frac{\lambda K}{\mu-\delta-\lambda}G_{\lambda}(y)(\gamma y+1)\notag\\
		%&=\quad \frac{ C^{\sell}}{\gamma}\left[\frac{\sigma^2}{2}+\frac{\mu}{n_1-1}-\frac{(\lambda+\delta)}{n_1(n_1-1)}\right]-G_{\lambda}(y)=-G_{\lambda}(y)\leq 0,
		%\end{align*}
		%where in the last equality we used that
		%\begin{align*}
		%-\frac{\mu}{\gamma}\frac{\lambda K}{\mu-\delta-\lambda}G_{\lambda}(y)(\gamma y+1)(n_1-1)+G_{\lambda}(y)(n_1-1)=n_1 C^{\sell}.
		%\end{align*}
		\begin{align}\label{Xi2_G}
			\Xi^{(2)}_{y}[v](G_{\lambda}(y)+)
			&=-\frac{1}{\gamma}\bigg[\frac{\lambda+\delta}{n_{1}}\bigg]\bigg[\dfrac{C^{\sell}}{n_{1}-1}\bigg]-\frac{\sigma^{2}}{2\gamma}\frac{\lambda K}{\mu-\delta-\lambda}G_{\lambda}(y)\notag\\
			&\quad-\frac{1}{\gamma}\bigg[\mu-\frac{\sigma^{2}}{2}\bigg]\bigg[\dfrac{\lambda K}{\mu-\delta-\lambda}G_{\lambda}(y)+C^{\sell}\bigg]+G_{\lambda}(y)\frac{\mu}{\gamma}\notag\\
			&\quad-(\lambda+\delta) \bigg[\frac{\lambda K}{\mu-\delta-\lambda}G_{\lambda}(y)y\bigg]-\lambda KG_{\lambda}(y)y\notag\\
			%&=-\frac{1}{\gamma}\bigg[\frac{\lambda+\delta}{n_{1}}\bigg]\bigg[\dfrac{C^{\sell}}{n_{1}-1}\bigg]-\frac{1}{\gamma}\bigg[\mu-\frac{\sigma^{2}}{2}\bigg]C^{\sell}\notag\\
			%&-\frac{\sigma^{2}}{2\gamma}\frac{\lambda K}{\mu-\delta-\lambda}G_{\lambda}(y)-\frac{1}{\gamma}\bigg[\mu-\frac{\sigma^{2}}{2}\bigg]\dfrac{\lambda K}{\mu-\delta-\lambda}G_{\lambda}(y)\notag\\&+G_{\lambda}(y)\frac{\mu}{\gamma}\notag\\
			%&-(\lambda+\delta) \bigg[\frac{\lambda K}{\mu-\delta-\lambda}G_{\lambda}(y)y\bigg]-\lambda KG_{\lambda}(y)y\notag\\
			%&=-\frac{C^{\sell}}{\gamma}\bigg\{\bigg[\frac{\lambda+\delta}{n_{1}}\bigg]\bigg[\dfrac{1}{n_{1}-1}\bigg]+\bigg[\mu-\frac{\sigma^{2}}{2}\bigg]\bigg\}\notag\\
			%&+G_{\lambda}(y)\frac{\mu}{\gamma}\bigg[1-\frac{\lambda K}{\mu-\delta-\lambda}\bigg]-\frac{\lambda \mu K}{\mu-\delta-\lambda} G_{\lambda}(y)y\notag\\
			&=-\frac{C^{\sell}}{\gamma}\bigg\{\bigg[\frac{\lambda+\delta}{n_{1}}\bigg]\bigg[\dfrac{1}{n_{1}-1}\bigg]+\bigg[\mu-\frac{\sigma^{2}}{2}\bigg]\bigg\}\notag\\
			&\quad+G_{\lambda}(y)\frac{\mu}{\gamma}\bigg\{1-\frac{\lambda K}{\mu-\delta-\lambda}[1+\gamma y]\bigg\}\notag\\
			%&=-\frac{C^{\sell}}{\gamma}\bigg\{\bigg[\frac{\lambda+\delta}{n_{1}}\bigg]\bigg[\dfrac{1}{n_{1}-1}\bigg]+\bigg[\mu-\frac{\sigma^{2}}{2}\bigg]\bigg\}+\frac{\mu}{\gamma}\bigg[\frac{n_{1}C^{\sell}}{n_{1}-1}\bigg]\notag\\
			&=-\frac{C^{\sell}}{\gamma}\bigg\{\bigg[\frac{\lambda+\delta}{n_{1}}\bigg]\bigg[\dfrac{1}{n_{1}-1}\bigg]+\bigg[\mu-\frac{\sigma^{2}}{2}\bigg]-\mu\bigg[\frac{n_{1}}{n_{1}-1}\bigg]\bigg\}=0.
		\end{align}
		We now prove that the mapping $x \mapsto \Xi^{(2)}_{y}[v](x)$ is non-decreasing for $x \in (G_{\lambda}(y), G_{\lambda}(y)\expo^{\gamma y})$. 
		To this end, note that, by \eqref{fun_g}, the following relation holds:
		\begin{align}\label{ser_G}
			G_{\lambda}'(y)=G^2_{\lambda}(y)\frac{n_1-1}{n_1 C^{\sell}}\frac{\lambda K}{\mu-\delta-\lambda}\gamma.
		\end{align}
		Hence, differentiating \eqref{xi_2_s_2} and applying \eqref{ser_G} yields
		\begin{align}\label{der_Xi_2}
			\Xi^{(2)\prime}_{y}[v](x)&=\frac{\lambda K}{\mu-\delta-\lambda}\left(\frac{G_{\lambda}(y)}{G_{\lambda}(y-\mathbb{Y}(x,y))}\right)^{n_1}\frac{G_{\lambda}(y-\mathbb{Y}(x,y))}{n_1 x\gamma}\Bigg[-(\lambda+\delta)\notag\\&-\frac{\sigma^2}{2}\frac{(n_1-1)^2}{ C^{\sell}}\frac{\lambda K}{\mu-\delta-\lambda}G_{\lambda}(y-\mathbb{Y}(x,y))\Bigg]+\frac{\mu}{\gamma}\notag\\
			&-\frac{(\lambda+\delta)}{\gamma}\Bigg[-\frac{\lambda K}{\mu-\delta-\lambda}\frac{G_{\lambda}(y)}{x}+1-\frac{ C^{\sell}}{x}\Bigg]-\lambda Ky\notag\\
			%&=\frac{\lambda K G_{\lambda}(y)^{n_{1}}}{n_{1}\gamma[\delta+\lambda-\mu]}\left(\frac{1}{xG_{\lambda}(y-\mathbb{Y}(x,y))^{n_{1}-1}}\right)\Bigg[(\lambda+\delta)\notag\\
			%&-\frac{[\sigma[n_1-1]]^{2}\lambda K}{2C^{\sell}[\delta+\lambda-\mu]}G_{\lambda}(y-\mathbb{Y}(x,y))\Bigg]\notag\\
			%&-\bigg\{-\frac{\mu}{\gamma}+\frac{(\lambda+\delta)}{\gamma}\Bigg[\frac{\lambda K}{\delta+\lambda-\mu}\frac{G_{\lambda}(y)}{x}-\frac{ C^{\sell}}{x}+1\Bigg]+\lambda Ky\bigg\}\notag\\
			&=\left(\frac{1}{xG_{\lambda}(y-\mathbb{Y}(x,y))^{n_{1}-1}}\right)\bigg\{\frac{\lambda K G_{\lambda}(y)^{n_{1}}}{n_{1}\gamma[\delta+\lambda-\mu]}\Bigg[(\lambda+\delta)\notag\\&-\frac{[\sigma[n_1-1]]^{2}\lambda K}{2C^{\sell}[\delta+\lambda-\mu]}G_{\lambda}(y-\mathbb{Y}(x,y))\Bigg]\notag\\
			&-G_{\lambda}(y-\mathbb{Y}(x,y))^{n_{1}-1}\bigg\{\frac{(\lambda+\delta)}{\gamma}\Bigg[\frac{\lambda K}{\delta+\lambda-\mu}G_{\lambda}(y)- C^{\sell}\Bigg]\notag\\&+x\bigg[\frac{\lambda+\delta-\mu}{\gamma}+\lambda Ky\bigg]\bigg\}\bigg\}.
		\end{align}
		Note that the mapping 
		\begin{equation*}
			x\mapsto (\lambda+\delta)-\frac{[\sigma[n_1-1]]^{2}\lambda K}{2C^{\sell}[\delta+\lambda-\mu]}G_{\lambda}(y-\mathbb{Y}(x,y))
		\end{equation*}
		is strictly decreasing in $x$, while the mapping
		\begin{multline*}
			x\mapsto G_{\lambda}(y-\mathbb{Y}(x,y))^{n_{1}-1}\bigg\{\frac{(\lambda+\delta)}{\gamma}\Bigg[\frac{\lambda K}{\delta+\lambda-\mu}G_{\lambda}(y)- C^{\sell}\Bigg]+x\bigg[\frac{\lambda+\delta-\mu}{\gamma}+\lambda Ky\bigg]\bigg\}
		\end{multline*}
		is strictly increasing. Combining these observations in \eqref{der_Xi_2} shows that the overall mapping \(x\mapsto \Xi^{(2)\prime}_{y}[v](x)\) is non-increasing. Therefore, to verify \(\Xi^{(2)\prime}_{y}[v](x)\le 0\) for \(x\in(G_{\lambda}(y),G_{\lambda}(y)\expo^{\gamma y})\), it suffices to check that \(\Xi^{(2)\prime}_{y}[v](G_{\lambda}(y))\le 0\) for \(y>0\). To this end, note that using \eqref{der_Xi_2}, we can write
		\begin{align}\label{Xi_2<0}
			\Xi^{(2)\prime}_{y}[v](G_{\lambda}(y))
			&=\frac{1}{G_{\lambda}(y)^{n_{1}}}\bigg\{\frac{\lambda K G_{\lambda}(y)^{n_{1}}}{n_{1}\gamma[\delta+\lambda-\mu]}\Bigg[(\lambda+\delta)-\frac{[\sigma[n_1-1]]^{2}\lambda K}{2C^{\sell}[\delta+\lambda-\mu]}G_{\lambda}(y)\Bigg]\notag\\
			&\quad-G_{\lambda}(y)^{n_{1}-1}\bigg\{\frac{(\lambda+\delta)}{\gamma}\Bigg[\frac{\lambda K}{\delta+\lambda-\mu}G_{\lambda}(y)- C^{\sell}\Bigg]\notag\\
			&\quad+G_{\lambda}(y)\bigg[\frac{\lambda+\delta-\mu}{\gamma}+\lambda Ky\bigg]\bigg\}\bigg\}=\frac{1}{G_{\lambda}(y)}H(y),
		\end{align}
		where
		\begin{align*}
			H(y)%&\eqdef\frac{\lambda K}{n_1\gamma(\lambda+\delta-\mu)}G_{\lambda}(y)\left[(\lambda+\delta)-\frac{\sigma^2}{2}\frac{(n_1-1)^2}{ C^{\sell}(\lambda+\delta-\mu)}\lambda KG_{\lambda}(y)\right]\\
			%&-\bigg[\frac{(\lambda+\delta)}{\gamma}\left(\frac{\lambda K}{\lambda+\delta-\mu}G_{\lambda}(y)- C^{\sell}\right)\notag\\&+G_{\lambda}(y)\left(\frac{\lambda+\delta-\mu}{\gamma}+\lambda K y\right)\bigg]\\
			&\eqdef\frac{\lambda K}{n_1\gamma(\lambda+\delta-\mu)}G_{\lambda}(y)\left[(\lambda+\delta)-\frac{\sigma^2}{2}\frac{(n_1-1)^2}{ C^{\sell}(\lambda+\delta-\mu)}\lambda KG_{\lambda}(y)\right]\\
			&\quad-\bigg[\frac{(\lambda+\delta)}{\gamma}\left(\frac{\lambda K}{\lambda+\delta-\mu}G_{\lambda}(y)- C^{\sell}\right)\notag\\&\quad+G_{\lambda}(y)\frac{\lambda+\delta-\mu}{\gamma}\left(1+\frac{\lambda K y\gamma}{\lambda+\delta-\mu}\right)\bigg].
		\end{align*}
		Using \eqref{fun_g} we obtain that
		\[
		G_{\lambda}(y)\left[1+\frac{\lambda K y\gamma}{\lambda+\delta-\mu}\right]=\frac{n_1}{n_1-1} C^{\sell}-\frac{\lambda K}{\lambda+\delta-\mu}G_{\lambda}(y).
		\]
		Hence, for $y>0$,
		\begin{align}\label{fun_H<0}
			H(y)&=\frac{\lambda K(\lambda+\delta)}{n_1\gamma(\lambda+\delta-\mu)}G_{\lambda}(y)\notag\\
			&\quad-\frac{\lambda K}{n_1\gamma(\lambda+\delta-\mu)}G_{\lambda}(y)\left[\frac{\sigma^2}{2}\frac{(n_1-1)^2}{ C^{\sell}(\lambda+\delta-\mu)}\lambda KG_{\lambda}(y)\right]\notag\\
			&\quad-\frac{(\lambda+\delta)}{\gamma}\left(\frac{\lambda K}{\lambda+\delta-\mu}G_{\lambda}(y)- C^{\sell}\right)\notag\\
			&\quad-\frac{\lambda+\delta-\mu}{\gamma}\left(\frac{n_1}{n_1-1} C^{\sell}-\frac{\lambda K}{\lambda+\delta-\mu}G_{\lambda}(y)\right)\notag\\
			%&=-\frac{\lambda K}{n_1\gamma(\lambda+\delta-\mu)}G_{\lambda}(y)\left[\frac{\sigma^2}{2}\frac{(n_1-1)^2}{ C^{\sell}(\lambda+\delta-\mu)}\lambda KG_{\lambda}(y)\right]\notag\\
			%&+\frac{\lambda K(\lambda+\delta)}{n_1\gamma(\lambda+\delta-\mu)}G_{\lambda}(y)-\frac{(\lambda+\delta)}{\gamma}\left(\frac{\lambda K}{\lambda+\delta-\mu}G_{\lambda}(y)\right)\notag\\&+\frac{\lambda+\delta-\mu}{\gamma}\left(\frac{\lambda K}{\lambda+\delta-\mu}G_{\lambda}(y)\right)\notag\\
			%	&+\frac{(\lambda+\delta)}{\gamma} C^{\sell}-\frac{\lambda+\delta-\mu}{\gamma}\left(\frac{n_1}{n_1-1}\right)  C^{\sell}\notag\\
			%&=-\frac{\lambda K}{n_1\gamma(\lambda+\delta-\mu)}G_{\lambda}(y)\left[\frac{\sigma^2}{2}\frac{(n_1-1)^2}{ C^{\sell}(\lambda+\delta-\mu)}\lambda KG_{\lambda}(y)\right]\notag\\&+\bigg\{(\lambda+\delta)-\mu n_{1}\bigg\}\frac{G_{\lambda}(y)\lambda K}{\gamma n_{1}[\lambda+\delta-\mu]}\notag\\
			%	&+\frac{(\lambda+\delta)}{\gamma} C^{\sell}-\frac{\lambda+\delta-\mu}{\gamma}\left(\frac{n_1}{n_1-1}\right)  C^{\sell}\notag\\
			&=\frac{\lambda K}{n_1\gamma(\lambda+\delta-\mu)}G_{\lambda}(y)\left[(\lambda+\delta)-\frac{\sigma^2}{2}\frac{(n_1-1)^2}{ C^{\sell}(\lambda+\delta-\mu)}\lambda KG_{\lambda}(y)\right]\notag\\
			&\quad-\left[\frac{\mu\lambda K }{\gamma(\lambda+\delta-\mu)}G_{\lambda}(y)+\frac{ C^{\sell}}{\gamma}\frac{\lambda+\delta-n_1\mu}{n_1-1}\right]\notag\\
			&=-\frac{\lambda K}{n_1\gamma(\lambda+\delta-\mu)}G_{\lambda}(y)\left[\frac{\sigma^2}{2}\frac{(n_1-1)^2}{ C^{\sell}(\lambda+\delta-\mu)}\lambda KG_{\lambda}(y)\right]\notag\\
			&\quad+\frac{\lambda+\delta-n_1\mu}{\gamma n_1}\left(\frac{\lambda K}{\lambda+\delta-\mu}G_{\lambda}(y)-\frac{n_1 C^{\sell}}{n_1-1}\right)\notag\\
			%&=-\frac{\lambda K}{n_1\gamma(\lambda+\delta-\mu)}G_{\lambda}(y)\left[\frac{\sigma^2}{2}\frac{(n_1-1)^2}{ C^{\sell}(\lambda+\delta-\mu)}\lambda KG_{\lambda}(y)\right]\notag\\
			%	&-\frac{\lambda+\delta-n_1\mu}{\gamma n_1}G_{\lambda}(y)\left[1+\frac{\lambda K y\gamma}{\lambda+\delta-\mu}\right]\notag\\
			&=-\frac{\lambda K}{n_1\gamma(\lambda+\delta-\mu)}G_{\lambda}(y)\left[\frac{\sigma^2}{2}\frac{(n_1-1)^2}{ C^{\sell}(\lambda+\delta-\mu)}\lambda KG_{\lambda}(y)\right]\notag\\
			&\quad-\frac{\sigma^2}{2\gamma}(n_1-1)G_{\lambda}(y)\left[1+\frac{\lambda K y\gamma}{\lambda+\delta-\mu}\right]\leq 0.
		\end{align}
		Therefore, using \eqref{fun_H<0} in \eqref{Xi_2<0} shows that $\Xi^{(2)\prime}_{y}[v](G_{\lambda}(y)) \le 0$ for $y>0$. 
		It then follows that $\Xi^{(2)\prime}_{y}[v](x) \le 0$ for $x \in (G_{\lambda}(y), G_{\lambda}(y)\expo^{\gamma y})$ and $y>0$. 
		By the discussion preceding \eqref{Xi2_G} and identity \eqref{Xi2_G}, we conclude that 
		\[
		\Xi^{(2)}_{y}[v](x) \le 0 \quad \text{for } x \in (G_{\lambda}(y), G_{\lambda}(y)\expo^{\gamma y}) \text{ and } y>0.
		\] 
		Hence, $v$ is a solution to \eqref{HJB1} in $\mathcal{S}^{(\lambda)}_2$.
		
		\bigskip
		\textit{Region $\mathcal{S}^{(\lambda)}_1$.-}
		Note that by \eqref{sol<b}, for $x > G_{\lambda}(y)\expo^{\gamma y}$ and $y \ge 0$, we have
		\begin{align}\label{Xi_1<0reg3}
			\Xi^{(1)}_{y}[v](x)=-x(1-e^{-\gamma y})-xe^{-\gamma y}+C^{\sell}+x-C^{\sell}=0.
		\end{align}
		On the other hand, \eqref{sol<b} gives
		\begin{align*}
			\Xi^{(2)}_{y}[v](x)
			&=(\lambda +\delta)C^{\sell}y-x\bigg[\frac{\lambda+\delta-\mu}{\gamma}(1-\expo^{-\gamma y})+\lambda K y\bigg].\notag\\
			%&\leq \frac{(\mu-(\lambda+\delta))}{\gamma}G_{\lambda}(y)\expo^{\gamma y}(1-\expo^{-\gamma y})+(\lambda+\delta) C^{\sell}y-\lambda K G_{\lambda}(y)\expo^{\gamma y}y\notag\\
			%&\leq \frac{(\mu-(\lambda+\delta))}{\gamma}G_{\lambda}(y)(\expo^{\gamma y}-1)+(\lambda+\delta) C^{\sell}y-\lambda K G_{\lambda}(y)\expo^{\gamma y}y\notag\\
			%&\eqdef\Theta (y).
		\end{align*}
		Observe that for each fixed $y>0$, the function $x \mapsto \Xi^{(2)}_{y}[v](x)$ is decreasing on $(G_{\lambda}(y)\expo^{\gamma y},\infty)$. 
		Hence, it suffices to verify that
		\begin{align}\label{eq1}
			\Xi^{(2)}_{y}[v]&(G_{\lambda}(y)\expo^{\gamma y}+)\notag\\
			&=(\lambda +\delta)C^{\sell}y-G_{\lambda}(y)\expo^{\gamma y}\bigg[\frac{\lambda+\delta-\mu}{\gamma}(1-\expo^{-\gamma y})+\lambda K y\bigg]\notag\\
			&=G_{\lambda}(y)[\Theta^{(1)}(y)-\Theta^{(2)}(y)]<0,
		\end{align}
		where
		\begin{align*}
			\Theta^{(1)}(y)&\eqdef\frac{(\lambda +\delta)C^{\sell}y}{G_{\lambda}(y)}=\frac{[n_1-1][\lambda +\delta]}{n_1}\bigg[y+\frac{\lambda K}{\delta+\lambda-\mu}(\gamma y^{2}+y)\bigg],\notag\\
			\Theta^{(2)}(y)&\eqdef\expo^{\gamma y}\bigg[\frac{\lambda+\delta-\mu}{\gamma}(1-\expo^{-\gamma y})+\lambda K y\bigg].
		\end{align*}
		Since $\Theta^{(1)}$ and $\Theta^{(2)}$ have polynomial and exponential growth, respectively, and $\Theta^{(1)}(0+) = \Theta^{(2)}(0+) = 0$, it suffices to check that
		\begin{align}\label{eq2}
			\Theta^{(1)\prime}(0+)<\Theta^{(2)\prime}(0+). 
		\end{align}
		Computing the derivatives and taking $y \downarrow 0$ yields
		\[
		\Theta^{(1)\prime}(0+) = \frac{[n_1-1][\lambda + \delta][\lambda + \delta - \mu + \lambda K]}{n_1[\lambda + \delta - \mu]}, \quad
		\Theta^{(2)\prime}(0+) = \lambda + \delta - \mu + \lambda K,
		\]
		which shows that \eqref{eq2} holds. Therefore, \eqref{eq1} is satisfied for every $y>0$, and we conclude that
		\[
		\Xi^{(2)}_{y}[v](x) \le 0 \quad \text{for } x \in (G_{\lambda}(y)\expo^{\gamma y}, \infty) \text{ and } y>0.
		\]
		Combining this with \eqref{Xi_1<0reg3}, we obtain that $v$ is a solution to \eqref{HJB1} in $\mathcal{S}^{(\lambda)}_1$.


\begin{thebibliography}{99}
	\bibitem{AFS} \sc Alfonsi, A., Fruth, A., Schied, A. \rm Constrained portfolio liquidation in a limit book order, {\it Advances in Mathematics of Finance, Banach Center Publ. 83}, Polish Academy of Sciences, Warsaw, (2008), 9--25.
	
	\bibitem{AFS2} \sc Alfonsi, A., Fruth, A., Schied, A. \rm Optimal execution strategies in limit order books with general shape functions, {\it Quant. Finance} {\bf 10}, (2010), 143--157.
	
	\bibitem{A-C-2} \sc Almgren, R., Chriss, N. \rm Value under liquidation, {\it Risk} {\bf 12}, (1999), 61--63.
	
	\bibitem{A-C-3} \sc Almgren, R., Chriss, N. \rm Optimal execution of portfolio transactions, {\it J.  Risk} {\bf 3}, (2000), 5-39.
	
	
	
	
	
	
	\bibitem{BL} \sc Bertsimas, D., Lo, W., \rm Optimal control of execution costs, {\it J. Risk} {\bf 3}, (2000), 5--39.
	
	
	\bibitem{BY} \sc Bremaud, P., and M. Yor \rm Changes of Filtration and of Probability Measures. {\it Z.f.W.} {\bf 45}, 269–-295, (1978).
	
	%\bibitem{DI1993} {\sc Dupuis, P. and Ishii, H.} {\rm S{DE}s with oblique reflection on nonsmooth domains.} {\it Ann. Probab.} {\bf 21} (1993), 554--580. 
	
	
	\bibitem{E-J-Y}  \sc  Eliiott, R., Jeanblanc, M.  and M. Yor \rm On models of default risk. {\it Math. Finance} {\bf 10} (2000), 179--195.
	\bibitem{EEH} \sc Ettinger, B., Evans, S.N. and Hening, A., \rm Killed Brownian motion with a prescribed  lifetime distribution and models for default. {\it Ann. Applied Probab.} {24} (2014), 1-33. 
	
	
	\bibitem{EF} \sc Engle, R., Ferstenberg, R., \rm Execution risk, {\it J. Portfolio Manage.} {\bf 33}, (2007), 34--44.
	
	\bibitem{GS} \sc Gatheral, J., Schied, A. \rm Dynamical models of market impact and algorithms for order execution, in J,-P. Fouque \& J. Langsam, ends, Handbook on Systemic Risk, Cambridge University Press, (2013),  579-602.
	
	\bibitem{GSS} \sc Gatheral, J., Schied, A., Slynko, A. \rm Transient linear price impact and Fredholm integral equations, {\it Math. Finance} {\bf  22}, (2012), 445-474.
	
	%\bibitem{GL} \sc Guo, X., Liu, J. \rm Optimal stopping at the maximum of Geometric Brownian motion when signals are received, {\it J. Appl. Prob.}  {\bf 42}, (2005),  826-838.
	
	\bibitem{ZG15} \sc Guo, X., Zervos, M. \rm Optimal execution with multiplicative price impact, {\it SIAM J. Financial Math.} {\bf 6}, (2015), 281-305.
	
	\bibitem{HM} \sc He, H., Mamayski, H. \rm Dynamic trading with price impact, {\it J. Econom. Dynam. Control} {\bf  29}, (2005), 891-930.
	
	\bibitem{HMP19} \sc Hern\'andez-Hern\'andez, D, Moreno-Franco, H. and P\'erez J.L., \rm  Periodic strategies in optimal execution with multiplicative price impact, {\t   Mathematical Finance} {\bf 29}, (2019),  1039-1065.
	
	
	\bibitem{HS} \sc Huberman, G., Stanzi, W. \rm Price manipulation and quasi-arbitrage, {\it Econometrica} {\bf  72}, (2004), 1247-1275.
	
	\bibitem{L} \sc Lokka, A. \rm Optimal liquidation in a limit order book for a risk-averse investor, {\it Math. Finance} {\bf 24}, (2014), 696-727.
	
	\bibitem{OW} \sc Obizhaeva, A., Wang, J. \rm Optimal trading strategy and supply/demand dynamics, {\it J. Financ. Market} {\bf  16}, (2013), 1--13.
	
	\bibitem{PSS} \sc Predoiu, S., Shaiket, G., Shreve, S. \rm Optimal execution in a general one-sided limit order book, {\it SIAM J. Financial Math.} {\bf 2}, (2011), 183-212.
	
	\bibitem{QJL2023}  {\sc  Qiu, M.,  Jin, Z.,  Li, S.}  {\rm Optimal risk sharing and dividend strategies under default contagion: A semi-analytical approach}, {\it Insurance Math. Econom.}  {\bf 113}, (2023).
	
	
	\bibitem{SS} \sc Schied, A., Sch\"oneborn, T. \rm Risk aversion and the dynamics of optimal liquidation strategies in illiquid markets, {\it Finance Stoch.} {\bf 13}, (2009), 181--204.
	
	\bibitem{SST} \sc Schied, A., Sch\"oneborn, T., Tehranchi, M. \rm Optimal basket liquidation for CARA investors is deterministic, {\it Appl. Math. Finance} {\bf 17}, (2010), 471--489.
	
	
\end{thebibliography}
\end{document}